\pgfplotsset{compat=newest}
\pgfplotsset{plot coordinates/math parser=false} 
\def\endthebibliography{%
	\def\@noitemerr{\@latex@warning{Empty `thebibliography' environment}}%
	\endlist
}
\theoremstyle{plain}
\newtheorem{theorem}{Theorem}
\newtheorem{lemma}{Lemma}
\theoremstyle{definition}
\newtheorem{assumption}{Assumption}
\newtheorem{definition}{Definition}
\theoremstyle{remark}
\newtheorem{remark}{Remark}
\newcommand{\norm}[1]{\left\lVert#1\right\rVert}
\newcommand\numberthis{\addtocounter{equation}{1}\tag{\theequation}}
\renewcommand{\refeq}[1]{\overset{#1}{=}}
\newcommand{\refleq}[1]{\overset{#1}{\leq}}
\begin{document}


\title{Online convex optimization for data-driven control of dynamical systems} 

\author{M. Nonhoff\affilmark{1}}

\author{M. A. Müller\affilmark{1}  (Senior Member, IEEE)}

\affil{Leibniz University Hannover, Institute of Automatic Control, Hannover, Germany} 

\corresp{CORRESPONDING AUTHOR: M. Nonhoff (e-mail: \href{mailto:nonhoff@irt.uni-hannover.de}{nonhoff@irt.uni-hannover.de})}
\authornote{This work was supported by the Deutsche Forschungsgemeinschaft (DFG, German Research Foundation) - 505182457.}

\begin{abstract}
We propose an algorithm based on online convex optimization for controlling discrete-time linear dynamical systems. The algorithm is data-driven, i.e., does not require a model of the system, and is able to handle a priori unknown and time-varying cost functions. To this end, we make use of a single persistently exciting input-output sequence of the system and results from behavioral systems theory which enable it to handle unknown linear time-invariant systems. Moreover, we consider noisy output feedback instead of full state measurements and allow general economic cost functions. Our analysis of the closed loop reveals that the algorithm is able to achieve sublinear regret, where the measurement noise only adds an additional constant term to the regret upper bound. In order to do so, we derive a data-driven characterization of the steady-state manifold of an unknown system. Moreover, our algorithm is able to asymptotically exactly estimate the measurement noise. The effectiveness and applicational aspects of the proposed method are illustrated by means of a detailed simulation example in thermal control.
\end{abstract}

\begin{IEEEkeywords}
Data-driven control, Linear systems, Online optimization, Optimal control
\end{IEEEkeywords}

\maketitle

\section{INTRODUCTION}
This paper considers the problem of controlling an unknown linear time-invariant (LTI) system subject to time-varying and a priori unknown convex cost functions. In particular, we aim to minimize the accumulated cost obtained by our proposed algorithm in closed loop with the unknown system. The main difficulty arises from the fact that the cost functions are time-varying and a priori unknown, i.e., the cost function $L_t$ at time $t$ is only revealed to us at time step $t+1$. These kind of problems commonly arise in practice, e.g., in power grids due to a priori unknown renewable energy generation and unknown energy consumption \cite{Picallo2020}, in data center cooling \cite{Lazic2018}, or in robotics \cite{Zheng2020}. Our approach is inspired by online convex optimization (OCO) \cite{Hazan2016,Simonetto2020}, an online variant of classical numerical optimization. Whereas the classical OCO literature does not consider underlying dynamical systems, it has gained significant interest recently for solving optimal control tasks. Its main advantages include its ability to handle a priori unknown and time-varying cost functions, low computational complexity, and its ability to take constraints on the state and the input of the system into account. OCO-based algorithms have been proposed to control linear dynamical systems \cite{Nonhoff2020,Li2019} subject to process noise \cite{Agarwal2019,Hazan2020}, constraints \cite{Nonhoff2021,Li2021}, or output feedback \cite{Simchowitz2020}.

Most of the existing OCO-based algorithms in the literature discussed above depend crucially on model knowledge of the system. However, obtaining such a model can be difficult or expensive in certain applications. Hence, in recent years, direct data-based control approaches have received a considerable amount of attention, compare, e.g., \cite{Markovsky2021}. In this work, we employ a result from behavioral systems theory. The so-called fundamental lemma shows that a Hankel matrix consisting of a single persistently exciting input-output trajectory spans the whole vector space of all possible input-output trajectories of an LTI system \cite{Willems2005}. This result has recently drawn significant attention and has been applied to solve a variety of control problems, e.g., model predictive control (MPC) \cite{Coulson2019,Berberich2021}, state- and output-feedback design \cite{Berberich2020ACC,Xue2021,DePersis2020,Berberich2020arXiv,vanWaarde2022}, and output matching \cite{Markovsky2008}. We combine the fundamental lemma with OCO in order to control dynamical systems subject to time-varying cost functions, where neither the system nor the cost functions are known to the algorithm.

Another closely related line of research is so-called optimal steady-state (OSS) control. Therein, a system is controlled to the solution of a (possibly time-varying) optimization problem by applying gradient-based feedback and, typically, asymptotic guarantees in the form of stability of the overall system are derived \cite{Menta2018,Colombino2020}. Again, the main focus in the literature is on model-based control with process noise and output feedback \cite{Lawrence2018,Zheng2020, Cothren2022}. In~\cite{He2022}, a data-driven method for regulating the output of a general nonlinear system, subject to a constant disturbance, to the optimal steady state of a constant cost function is proposed. In particular, the authors leverage a result from zeroth order optimization in order to avoid requiring model knowledge of the controlled system. However, performance is only analyzed in terms of the second moments of the gradients of a smooth approximation of the cost function. Most relevant to this work is \cite{Bianchin2021}, where output feedback and unknown systems subject to disturbances are treated by application of the fundamental lemma. To this end, a steady-state map between the input to and the output of the unknown systems is estimated using only measured data. However, the cost functions are assumed to be constant and time-variability of the optimization problem is only introduced via time-varying process noise. Moreover, analysis of the closed loop's transient behavior is limited to analysis of contraction with respect to the optimal steady state, but does not consider the transient cost in terms of regret analysis.

The contribution of this work is fivefold. First, we consider an unknown system by leveraging results from data-driven control. Compared to alternative approaches in the literature, we thereby remove the need of a (set-based) model description and of an online estimation process. Second, we extend our previous results from OCO-based control \cite{Nonhoff2020,Nonhoff2021} to the case of output feedback instead of full state measurements, which requires considerable adjustments in algorithm design and analysis techniques. Third, we consider noise in the measurement process. In the relevant literature, e.g., \cite{Bianchin2021,Hazan2020,Li2021}, the main research focus is on systems subject to process noise, which is typically handled by estimating the process noise using exact measurements and model knowledge. We instead consider only noisy measurements in our theoretical work and leave the combination of both, process and measurement noise, as an interesting topic for future research. We do, however, consider both types of noise in our simulation example. Fourth, we generalize previous work \cite{Nonhoff2020,Nonhoff2021} by considering the practically relevant case of \textit{economic} cost functions, i.e., the minimum of the cost functions at each time step need \textit{not} be a steady state of the system. Finally, we derive a new data-driven characterization of the steady-state manifold of an LTI system by leveraging the fundamental lemma. As a main result, our analysis reveals that our proposed algorithm enjoys sublinear regret without access to a system model or exact measurements.

This paper is organized as follows. In Section~\ref{sec:setting}, we present the basic notions necessary in our work and discuss the problem of interest. Section~\ref{sec:algorithm} introduces and illustrates our proposed algorithm. In Section~\ref{sec:theoretical_results}, we discuss our theoretical findings, in particular a regret analysis of the closed loop and asymptotic convergence of the measurement error estimates. A numerical simulation example, namely a thermal control problem, illustrates the closed-loop performance and applicational aspects of our algorithm in Section~\ref{sec:numerical_example}. Section~\ref{sec:conclusion} concludes the paper.

We close this section by noting that a preliminary version of parts of this paper was presented at the 2021 60th IEEE Conference on Decision and Control (CDC) \cite{Nonhoff2021CDC}. This work extends the previously presented results in three directions. First, we consider measurement noise in this work and study its effect on the derived regret bound, which requires adaptations in both algorithm design and theoretical analysis. We show that measurement noise only leads to an additional \textit{constant} term in the regret bound compared to our previous work. Second, we generalize our work to consider economic cost functions, as discussed above. Third, we remove restrictive assumptions on the steady-state manifold, compare~\cite[Assumption~1]{Nonhoff2021CDC}, in order to be able to control a wider class of systems. Moreover, we include a detailed simulation example to illustrate the applicability of our proposed algorithm.

\textit{Notation:} We denote the set of integer numbers in the interval $[a,b]$ and the set of integer numbers greater than or equal to zero by $\mathbb I_{[a,b]}$ and $\mathbb I_{\geq 0}$, respectively. For a vector \mbox{$x \in \mathbb R^n$}, $\norm{x}$ is the euclidean norm and for a matrix $A \in \mathbb R^{n\times m}$ the corresponding induced matrix 2-norm is $\norm{A}$, whereas its Moore-Penrose-Pseudoinverse is denoted by $A^\dagger$. The identity matrix of size $n\times n$ is given by $I_n$, $1_n \in \mathbb R^n$ denotes the vector of all ones, and $0_n \in \mathbb R^n$ is the vector of all zeros. A sequence $\{z_k\}_{k=0}^{N-1}$, $z_k \in \mathbb R^n$, induces the Hankel matrix of depth $L$
\[
	H_L(z) = \begin{bmatrix} z_0 & z_1 & \dots & z_{N-L} \\ z_1 & z_2 & \dots & z_{N-L+1} \\ \vdots & \vdots & \ddots & \vdots \\ z_{L-1} & z_{L} & \dots & z_{N-1} \end{bmatrix} = \begin{bmatrix} H_L^1(z) \\ H_L^2(z) \\ \vdots \\ H_L^L(z) \end{bmatrix}.
\]
We denote a matrix containing a subset of block rows of $H_L(z)$ by
\[
	H_L^{a:b}(z) = \begin{bmatrix} H_L^a(z) \\ \vdots \\ H_L^b(z) \end{bmatrix}.
\]
With a slight abuse of notation, we write $z$ for the sequence itself as well as for the stacked vector of all its components. We denote by $z_{[a:b]} = \begin{bmatrix} z_a^\top & \dots & z_b^\top \end{bmatrix}^\top$ the stacked vector of a subset of its components. The shift operator $\sigma$ is defined by $\sigma z = \begin{bmatrix} z_1^\top & \dots & z_{N-1}^\top \end{bmatrix}^\top$. For matrices $A$ and $B$, $A \otimes B$ denotes the Kronecker product.

\section{SETTING} \label{sec:setting}
We consider linear time-invariant (LTI) systems of the form
\begin{align}
	\begin{split}
		x_{t+1} &= Ax_t + Bu_t \\
		y_t &= Cx_t + Du_t \\
		\tilde y_t &= y_t + e_t
	\end{split} \label{eq:system_ss}
\end{align}
where $x_t \in \mathbb R^n$ is the system state, $u_t \in \mathbb R^m$ is the system input, $y_t \in \mathbb R^p$ is the true system output, $\tilde y_t \in \mathbb R^p$ is the measured system output, and $e_t \in \mathbb R^p$ denotes measurement noise at time instance $t$. We denote by $z_t = \begin{bmatrix} u_t^\top & y_t^\top \end{bmatrix}^\top$ the stacked input-output pair at time $t$. The system matrices $(A,B,C,D)$ as well as the noise $e_t$ are unknown and only measurements of $u_t$ and $\tilde y_t$ are available to us. We do not impose any assumptions on the measurement noise~$e$. We make the following assumptions on system~\eqref{eq:system_ss}.
\begin{assumption} \label{assump:system_properties} The matrix $A$ is Schur stable, the pair $(A,B)$ is controllable, and the pair $(A,C)$ is observable.
\end{assumption}
Controllability and Observability are standard assumptions in the literature \cite{Bianchin2021}. Compared to~\cite{Nonhoff2021CDC}, we only consider stable systems because of the additional measurement noise. In this setting, we can estimate the measurement error asymptotically exactly, if the system is stable (compare Lemma~\ref{lem:error_estimate_convergence} below). If the system is not stable, data-based techniques from, e.g., \cite{DePersis2020,Berberich2020arXiv} can be used to stabilize the (unknown) system. Our algorithm can then be applied to the prestabilized system. However, some of our theoretical guarantees deteriorate for this approach, compare Remark~\ref{rem:unstable_systems} for more details.

Our goal is to solve the optimal control problem
\[
	\min_{u} \quad \sum_{t=0}^T L_t(u_t,y_t) \qquad 
	\text{s.t.} \quad \eqref{eq:system_ss}, \numberthis \label{eq:OCP}
\]
here the main difficulty arises from the fact that the time-varying cost functions $L_t: \mathbb R^m \times \mathbb R^p \rightarrow \mathbb R$ are a priori unknown. Specifically, we want to find a controller that computes an input $u_t$ at every time instance $t$ which is applied to system \eqref{eq:system_ss} and yields performance close to the solution of~\eqref{eq:OCP}. Only after $u_t$ is applied to system~\eqref{eq:system_ss}, the cost function $L_t$ is revealed, i.e., $u_t$ is computed by the algorithm without knowledge of the current cost function. Then, we measure the noisy output $\tilde y_t$ and move to the next time step. As standard in OCO, we do not attempt to solve~\eqref{eq:OCP} directly at each time step \cite{Hazan2016,Simonetto2020}. Since the cost functions are a priori unknown, optimization would have to be carried out based on the last known cost function $L_{t-1}$. Then, open-loop optimization will in general not improve the closed-loop performance, due to the time-varying nature of the cost functions. Therefore, we aim to design a computationally efficient algorithm, instead of solving a (potentially large-scale) optimization problem at each step. We denote the solution to \eqref{eq:OCP} in hindsight, i.e., the solution when knowing all cost functions, by $u^*=\{u^*_t\}_{t=0}^T$ and the corresponding system output by $y^*=\{y^*_t\}_{t=0}^T$. As common in OCO, we consider smooth convex cost functions as specified in Assumption~\ref{assump:cost_fcn}.
\begin{assumption} \label{assump:cost_fcn} The cost functions $L_t(z)$ are
\begin{itemize}
	\item $\alpha_z$-strongly convex, i.e., there exists $\alpha_z > 0$ such that
	\[
		L_t(z_1) \geq L_t(z_2) + \nabla L_t(z_2)^\top (z_1-z_2) + \frac{\alpha_z}{2} \norm{z_1-z_2}^2,
	\]
	\item $l_z$-smooth, i.e., there exists $l_z > 0$ such that
	\[
		L_t(z_1) \leq L_t(z_2) + \nabla L_t(z_2)^\top (z_1-z_2) + \frac{l_z}{2} \norm{z_1-z_2}^2,
	\]
	\item and Lipschitz continuous with Lipschitz constant $L_z$, i.e., there exists $L_z > 0$ such that
	\[
		\norm{L_t(z_1)-L_t(z_2)} \leq L_z \norm{z_1 - z_2},
	\]
\end{itemize}
for all $t \in \mathbb I_{\geq 0}$ and any two points $z_1,z_2 \in \mathbb R^{m+p}$.
\end{assumption}
\begin{remark} We assume Lipschitz continuity for clarity of exposition of our results, even though $l_z$-smoothness and Lipschitz continuity cannot be satisfied \textit{globally} simultaneously. However, if $u_t$ and $y_t$ remain within bounded sets for all time, Assumption~\ref{assump:cost_fcn} is satisfied on this bounded set. Moreover, techniques from~\cite{Nonhoff2020} can be used to avoid assuming Lipschitz continuity. In this case, all triangle inequalities in the proof of Theorem~\ref{thm:regret_bound} are replaced by Jensen's inequality which entails additional assumptions on the step size and the condition number $l_z/\alpha_z$ of the cost functions. Moreover, changing the regret definition below to $\mathcal R = \sum_{t=0}^T \norm{(u_t,y_t) - (\eta_t,\theta_t)}$ also removes the necessity to assume Lipschitz continuity of the cost functions. \hfill $\square$
\end{remark}
Characterizing the solution to \eqref{eq:OCP}, i.e., $u^*$ and $y^*$, for general time-varying cost functions $L_t$ requires optimization or verifying certain dissipativity conditions \cite{Muller2015,Grune2016,Grune2018} and is thus computationally expensive. For a priori unknown cost functions as considered in this work, computing $u^*$ and $y^*$ online is impossible altogether. Instead, we adopt a strategy of tracking the a priori unknown time-varying optimal states given by
\[
	(\eta_t, \theta_t) = \begin{cases} \arg \min_{u,y} & L_t(u,y) \\ \text{s.t. } &x = Ax + Bu \\ &y = Cx+Du \end{cases},
\]
where we define $\zeta_t = \begin{bmatrix} \eta_t^\top & \theta_t^\top \end{bmatrix}^\top$, $\eta_t\in\mathbb R^m$ is the optimal steady-state input, and $\theta_t\in\mathbb R^p$ is the optimal steady-state output of system~\eqref{eq:system_ss} at time~$t$. In case of constant convex cost functions $L$, steady-state operation is optimal \cite{Angeli2009}; hence, we expect that the proposed strategy yields good performance in many practical applications, in particular in case the cost functions $L_t$ do not change too frequently. Note that the setting considered here includes as a special case our previous works \cite{Nonhoff2020,Nonhoff2021,Nonhoff2021CDC}, where only strongly convex, smooth cost functions were considered that are each positive definite with respect to some (time-varying) steady state $(\eta_t,\theta_t)$ of the system. Here, we consider more general convex cost functions that do not need to satisfy this requirement. Such cost functions often occur in practice related to some economic considerations, such as minimization of energy cost (compare the example in Section~\ref{sec:numerical_example}), which is why such cost functions have been termed \textit{economic} in the context of model predictive control (see, e.g., \cite{Rawlings2017,Faulwasser2018,Rawlings2012}).

As common in OCO, we analyze our controller's closed-loop performance in terms of regret. In light of our strategy of tracking a priori unknown and time-varying optimal steady states of system~\eqref{eq:system_ss}, we define the regret $\mathcal R$ as
\[
	\mathcal R := \sum_{t=0}^T L_t(u_t,y_t) - L_t(\eta_t,\theta_t), \numberthis \label{eq:def_regret}
\]
i.e., the accumulated difference between the closed-loop cost of our controller and the optimal steady-state cost in hindsight. The regret~$\mathcal R$ is a measure of the performance lost due to not knowing the cost functions $L_t$ a priori. In the literature, commonly the goal is to achieve sublinear regret\footnote{In contrast to the classical OCO literature, we need to take the $\limsup$ instead of $\lim$ here due to the economic cost function (compare, e.g., \cite{Rawlings2012})}, i.e., 
\[
	\limsup_{T\rightarrow\infty} \mathcal R/T = \limsup_{T\rightarrow\infty} \frac{1}{T} \sum_{t=0}^T L_t(u_t,y_t) - L_t(\eta_t,\theta_t) \leq 0.
\]
Hence, if the proposed algorithm achieves sublinear regret, then the closed-loop cost is asymptotically on average no worse than the optimal steady-state cost. Such a performance result is typically also considered in the context of economic model predictive control (MPC), compare, e.g., \cite{Angeli2009,Faulwasser2018}.

Since we do not assume knowledge of the system matrices $(A,B,C,D)$, we assume that we have access to measurement data in the form of a prerecorded input-output sequence $\{u^d_k,y^d_k\}_{k=0}^{N-1}$ and an upper bound on the system order $n$. Note that we require the true system output as data instead of the (noisy) measured system output. Such data can be obtained in practice when, e.g., the prior data is recorded in a laboratory setting using more accurate measuring instruments than during online operation.
\begin{assumption} \label{assump:noisefree_offline_data}
	The output data $y^d = \{y^d_k\}_{k=0}^{N-1}$ is noise free.
\end{assumption}
Moreover, we assume that the data sequence is persistently exciting as defined in Definition~\ref{def:persistently_exciting}.
\begin{definition} \label{def:persistently_exciting} A signal $\{u_k\}_{k=0}^{N-1}$, $u_k \in \mathbb R^m$, is called persistently exciting of order $L$ if $\text{rank}(H_L(u))=mL$.
\end{definition}
This definition allows to characterize all possible system trajectories of \eqref{eq:system_ss} using only Hankel matrices of the data sequence. This result was first published in the context of behavioral system theory \cite{Willems2005} and can be formulated in the classical state space setting as follows.
\begin{theorem} \label{thm:fundamental_lemma} \cite[Theorem~3]{Berberich2020ECC} Suppose $\{u^d_k, y^d_k\}^{N-1}_{k=0}$ is a trajectory of system \eqref{eq:system_ss}, where $u^d$ is persistently exciting of order $L+n$ and let Assumption~\ref{assump:noisefree_offline_data} be satisfied. Then, $\{\bar u_k,\bar y_k\}^{L-1}_{k=0}$ is a trajectory of \eqref{eq:system_ss} if and only if there exists $\alpha \in \mathbb R^{N-L+1}$ such that
\[
	\begin{bmatrix} H_L(u^d) \\ H_L(y^d) \end{bmatrix} \alpha = \begin{bmatrix} \bar u \\ \bar y \end{bmatrix}.
\]
\end{theorem}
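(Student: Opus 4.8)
The plan is to prove the two implications of Theorem~\ref{thm:fundamental_lemma} separately: the ``if'' part is immediate, while the whole content of the ``only if'' part reduces to a single rank statement about Hankel matrices of the data. Throughout, write $\mathcal{H} := \begin{bmatrix} H_L(u^d) \\ H_L(y^d)\end{bmatrix}$ and let $\mathcal{B}_L$ denote the set of all length-$L$ input-output trajectories of~\eqref{eq:system_ss}.

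\emph{``If'' direction.} Each column of $\mathcal{H}$ is the stacked vector of a length-$L$ window of consecutive samples of the recorded trajectory, hence itself an element of $\mathcal{B}_L$ (take the state at the start of the window as the initial condition). Since~\eqref{eq:system_ss} is linear and time-invariant, $\mathcal{B}_L$ is a linear subspace of $\mathbb{R}^{(m+p)L}$, so any linear combination $\mathcal{H}\alpha$ of these windows again lies in $\mathcal{B}_L$, i.e., is a trajectory. This direction uses neither persistency of excitation nor Assumption~\ref{assump:system_properties}.

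\emph{``Only if'' direction.} By the ``if'' part, $\operatorname{im}\mathcal{H} \subseteq \mathcal{B}_L$, so it suffices to show $\operatorname{rank}\mathcal{H} = \dim\mathcal{B}_L$. First, the linear map sending an initial state $x_0$ and an input sequence $u_{[0:L-1]}$ to the corresponding trajectory is surjective onto $\mathcal{B}_L$ with kernel $\{(x_0,0):\mathcal{O}_L x_0 = 0\}$, where $\mathcal{O}_L := \begin{bmatrix} C^\top & (CA)^\top & \cdots & (CA^{L-1})^\top\end{bmatrix}^\top$; by rank-nullity, $\dim\mathcal{B}_L = mL + \operatorname{rank}\mathcal{O}_L$ (which equals $mL+n$ once $L$ is at least the observability index of $(A,C)$). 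Second, letting $X_0 := \begin{bmatrix} x^d_0 & x^d_1 & \cdots & x^d_{N-L}\end{bmatrix}$ be the matrix of states along the data trajectory, the input-output relation of~\eqref{eq:system_ss} gives the factorization
\[
\mathcal{H} = \begin{bmatrix} I_{mL} & 0 \\ \mathcal{T}_L & \mathcal{O}_L\end{bmatrix}\begin{bmatrix} H_L(u^d) \\ X_0\end{bmatrix},
\]
where $\mathcal{T}_L$ is the block lower-triangular Toeplitz matrix built from $D$ and the Markov parameters $CA^{i-1}B$. The left factor has rank $mL + \operatorname{rank}\mathcal{O}_L$, so if $\begin{bmatrix} H_L(u^d) \\ X_0\end{bmatrix}$ has full row rank $mL+n$ then, using $\operatorname{rank}(KM) = \operatorname{rank}(K)$ whenever $M$ has full row rank, $\operatorname{rank}\mathcal{H} = mL + \operatorname{rank}\mathcal{O}_L = \dim\mathcal{B}_L$, which finishes the argument.

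\emph{The main obstacle.} It remains to establish $\operatorname{rank}\begin{bmatrix} H_L(u^d) \\ X_0\end{bmatrix} = mL+n$; this is the genuinely substantive step and the only place where controllability of $(A,B)$ and $u^d$ being persistently exciting of order $L+n$ are used (observability entered only through the dimension count above). I would argue by contradiction: a rank deficiency yields a nonzero row vector $\begin{bmatrix}\mu^\top & \xi^\top\end{bmatrix}$, with $\mu = \begin{bmatrix}\mu_0^\top & \cdots & \mu_{L-1}^\top\end{bmatrix}^\top$, annihilating the matrix, so that $\sum_{i=0}^{L-1}\mu_i^\top u^d_{j+i} + \xi^\top x^d_j = 0$ for every admissible $j$ --- a fixed linear relation satisfied along the entire data trajectory. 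Propagating this relation using the state update $x^d_{j+1} = Ax^d_j + Bu^d_j$ and the fact that controllability makes the state reachable in $n$ steps, one converts it into a nontrivial linear dependence among the block rows of $H_{L+n}(u^d)$, contradicting $\operatorname{rank} H_{L+n}(u^d) = m(L+n)$. Carrying out this propagation carefully is the crux of the proof; equivalently, one may simply invoke the corresponding rank lemma of~\cite{Willems2005} or~\cite{Berberich2020ECC}.
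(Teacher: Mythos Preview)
The paper does not prove Theorem~\ref{thm:fundamental_lemma}; it is quoted verbatim from \cite[Theorem~3]{Berberich2020ECC} and used as a black box, so there is no ``paper's own proof'' to compare against. Your sketch is the standard state-space proof of the fundamental lemma and is essentially correct: the ``if'' direction is immediate from linearity, and for the ``only if'' direction you correctly reduce everything to the rank identity $\operatorname{rank}\begin{bmatrix} H_L(u^d) \\ X_0\end{bmatrix} = mL+n$, which is precisely the lemma of Willems et al.\ that you cite at the end. The one point worth noting is that your contradiction sketch for this rank identity is left vague (``propagating this relation using the state update \ldots\ one converts it into a nontrivial linear dependence''); the actual propagation requires iterating the relation $n$ times and using the Cayley--Hamilton theorem together with controllability to show the resulting coefficient vector on $H_{L+n}(u^d)$ is nonzero, which is a few more lines of work than you indicate. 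Since you explicitly offer the alternative of invoking the rank lemma from \cite{Willems2005} or \cite{Berberich2020ECC}, the argument is complete as stated.
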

As discussed above, we aim to track a series of a priori unknown steady states without access to a model of the system. Therefore, a data-driven definition of steady states is given in Definition~\ref{def:steady_state}.
\begin{definition} \label{def:steady_state} An input-output pair $(u^s,y^s)$ is an equilibrium of \eqref{eq:system_ss}, if the sequence $\{u_k,y_k\}_{k=0}^{n}$ with $(u_k,y_k) = (u^s,y^s)$ for all $k \in \mathbb I_{[0,n]}$ is a trajectory of \eqref{eq:system_ss}.
\end{definition}
Definition~\ref{def:steady_state} states that an input-output pair $(u^s,y^s)$ is an equilibrium of system~\eqref{eq:system_ss} if and only if a sequence consisting of $(u^s,y^s)$ for at least $n+1$ consecutive time steps is a trajectory of the system. We make use of Definition~\ref{def:steady_state} and the prerecorded data sequence to characterize the steady-state manifold of system~\eqref{eq:system_ss} in Lemma~\ref{lem:steady_states}.
\begin{lemma} \label{lem:steady_states} Let Assumption~\ref{assump:noisefree_offline_data} be satisfied. Assume that the sequence $u^d$ is persistently exciting of order $2n+1$. Then, the input-output pair $(u^s,y^s)$ is an equilibrium of \eqref{eq:system_ss} if and only if
\[
	S z^s = \begin{bmatrix} S_u & S_y \end{bmatrix} \begin{bmatrix} u^s \\ y^s \end{bmatrix} = 0,
\]
where $S = \left( H_{n+1}H_{n+1}^\dagger - I_{(m+p)(n+1)}\right) \begin{bmatrix} \hat I_m & 0 \\ 0 & \hat I_p \end{bmatrix}$, $H_{n+1} = \begin{bmatrix} H_{n+1} (u^d) \\ H_{n+1} (y^d) \end{bmatrix}$, $\hat I_m = 1_{n+1} \otimes I_m$, and $\hat I_p = 1_{n+1} \otimes I_p$.
\end{lemma}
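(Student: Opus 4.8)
The plan is to reduce the equilibrium characterization in Definition~\ref{def:steady_state} to a linear feasibility problem via the fundamental lemma, and then translate feasibility of that linear system into the stated image/kernel condition using the basic property of the Moore--Penrose pseudoinverse. By Definition~\ref{def:steady_state}, $(u^s,y^s)$ is an equilibrium of \eqref{eq:system_ss} if and only if the length-$(n+1)$ sequence that is constantly equal to $(u^s,y^s)$ is a trajectory of \eqref{eq:system_ss}. Stacking this constant sequence into a single vector gives $\bar u = 1_{n+1}\otimes u^s = \hat I_m u^s$ and $\bar y = 1_{n+1}\otimes y^s = \hat I_p y^s$, that is, $\begin{bmatrix}\bar u^\top & \bar y^\top\end{bmatrix}^\top = \begin{bmatrix}\hat I_m & 0\\ 0 & \hat I_p\end{bmatrix} z^s$.

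Next I would invoke Theorem~\ref{thm:fundamental_lemma} with depth $L = n+1$. The persistency-of-excitation hypothesis required there is that $u^d$ be persistently exciting of order $L + n = 2n+1$, which is exactly the assumption of the lemma; together with Assumption~\ref{assump:noisefree_offline_data} and the fact that $\{u^d_k,y^d_k\}_{k=0}^{N-1}$ is a trajectory of \eqref{eq:system_ss}, the fundamental lemma applies. It yields that the constant sequence above is a trajectory of \eqref{eq:system_ss} if and only if there exists $\alpha \in \mathbb R^{N-n}$ with $H_{n+1}\alpha = \begin{bmatrix}\hat I_m & 0\\ 0 & \hat I_p\end{bmatrix} z^s$. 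Hence $(u^s,y^s)$ is an equilibrium if and only if the vector $b := \begin{bmatrix}\hat I_m & 0\\ 0 & \hat I_p\end{bmatrix} z^s$ lies in the range (column space) of $H_{n+1}$.

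Finally I would use that $H_{n+1}H_{n+1}^\dagger$ is the orthogonal projector onto $\operatorname{range}(H_{n+1})$: for any vector $b$ one has $b \in \operatorname{range}(H_{n+1})$ if and only if $H_{n+1}H_{n+1}^\dagger b = b$, i.e. $\big(H_{n+1}H_{n+1}^\dagger - I_{(m+p)(n+1)}\big)b = 0$. Substituting $b = \begin{bmatrix}\hat I_m & 0\\ 0 & \hat I_p\end{bmatrix} z^s$ gives $S z^s = 0$ with $S$ exactly as defined in the statement, and this equivalence is bidirectional; for the ``if'' direction one explicitly recovers a feasible coefficient vector $\alpha = H_{n+1}^\dagger b$, which by Theorem~\ref{thm:fundamental_lemma} certifies that the constant sequence is a trajectory and hence $(u^s,y^s)$ an equilibrium.

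The argument is essentially bookkeeping, and I do not anticipate a genuine obstacle; the only two points that need care are (i) matching the depth $L = n+1$ implicit in Definition~\ref{def:steady_state} with the persistency-of-excitation order $2n+1$ needed to apply Theorem~\ref{thm:fundamental_lemma}, and (ii) justifying the pseudoinverse projection identity, which follows from $H_{n+1}H_{n+1}^\dagger H_{n+1} = H_{n+1}$ together with $H_{n+1}H_{n+1}^\dagger$ being symmetric and idempotent.
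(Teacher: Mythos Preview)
Your proposal is correct and follows essentially the same route as the paper: invoke Definition~\ref{def:steady_state} together with Theorem~\ref{thm:fundamental_lemma} at depth $L=n+1$ to reduce the equilibrium condition to $b\in\operatorname{range}(H_{n+1})$, and then translate this range condition into $(H_{n+1}H_{n+1}^\dagger - I)b = 0$ via the pseudoinverse. The paper phrases the last step slightly differently, writing out the general solution $\nu = H_{n+1}^\dagger b + (I - H_{n+1}^\dagger H_{n+1})\nu'$ and substituting back, but this is just an alternative way of encoding the same projection identity you use.
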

\begin{proof}
	By Definition~\ref{def:steady_state} and Theorem~\ref{thm:fundamental_lemma}, $(u^s,y^s)$ is a steady state of \eqref{eq:system_ss} if and only if there exists $\nu \in \mathbb R^{N-n}$ such that
	\begin{equation}
		H_{n+1} \nu = \begin{bmatrix} H_{n+1} (u^d) \\ H_{n+1} (y^d) \end{bmatrix} \nu = \begin{bmatrix} \hat I_m u^s \\ \hat I_p y^s \end{bmatrix}. \label{eq:proof_lemma_1}
	\end{equation}
	The general solution to this equation is given by
	\[
		\nu = H_{n+1}^\dagger \begin{bmatrix} \hat I_m u^s \\ \hat I_p y^s \end{bmatrix} + (I_{N-n} + H_{n+1}^\dagger H_{n+1}) \nu',
	\]
	where $\nu' \in \mathbb R^{N-n}$ can be chosen arbitrary. Since the second term on the right-hand side of the above expression is in the nullspace of $H_{n+1}$, inserting $\nu$ back into~\eqref{eq:proof_lemma_1} yields
	\[
		\left( H_{n+1} H_{n+1}^\dagger - I_{(m+p)(n+1)} \right) \begin{bmatrix} \hat I_m &0 \\ 0 & \hat I_p \end{bmatrix} z^s = 0_{(m+p)(n+1)},
	\]
	which proves the result.
\end{proof}
Lemma~\ref{lem:steady_states} explicitly defines the steady-state manifold of system~\eqref{eq:system_ss} only in terms of Hankel matrices of the prerecorded data sequence. 

\section{ALGORITHM} \label{sec:algorithm}
In this section, we introduce our algorithm. For notational convenience, we define $U = H_{2n+\mu+1}(u^d)$ and \mbox{$Y = H_{2n+\mu+1}(y^d)$}, i.e., the Hankel matrices associated with the system input and output, respectively. In addition, we denote
\[
	H_\alpha = \begin{bmatrix} U^{1:n} \\ U^{n+1:2n+\mu+1} \\ Y^{1:n} \end{bmatrix}\text{, and } 
	H_\beta = \begin{bmatrix} U^{1:n} \\ U^{n+\mu+1:2n+\mu+1} \\ Y^{1:n} \\ Y^{n+\mu+1:2n+\mu} \end{bmatrix}.
\]
The proposed data-driven OCO scheme is given in Algorithm~1. In the framework described above, at every time instance $t$, Algorithm~1 
\begin{enumerate}
	\item computes an input $u_t$ via \eqref{eq:defhate}-\eqref{eq:OutputAlgo} and applies it to system~\eqref{eq:system_ss},
	\item measures the output $\tilde y_{t}$ and receives the cost function $L_{t}$,
	\item moves to time step $t+1$.
\end{enumerate} 

\begin{figure} 
	\vspace{0pt}
	{
		\fbox{\parbox{.94\linewidth}{
				\underline{Algorithm 1: Data-Driven Output Feedback}
				
				\vspace{7pt}
				
				\noindent Given step size $\gamma$, prediction horizon $\mu$, initialization $\hat u_{-1} \in \mathbb{R}^{m(\mu+1)}$, $z^s_{-1} \in \mathbb R^{m+p}$, $u_{[-n:-1]}$, $\tilde y_{[-n:-1]} - \hat e_{[-n:-1]}$, and data $(u^d,y^d)$. At each time $t$:
				\begin{align}
					&\text{If $t=0$, go to \eqref{eq:defalpha_algo}} \nonumber \\
					& \hat e_{t-1} = \tilde y_{t-1} - Y^{n+1} (\alpha_{t-1} + \beta_{t-1})  \label{eq:defhate}\\
					&\text{Choose $\alpha_t$  such that} \nonumber \\[-4pt]
					&\quad H_\alpha \alpha_t = \begin{bmatrix} u_{[t-n:t-1]} \\ \hdashline \sigma \hat u_{t-1} \\ 1_{n+1} \otimes u^s_{t-1} \\ \hdashline \tilde y_{[t-n:t-1]} - \hat e_{[t-n:t-1]} \end{bmatrix} \label{eq:defalpha_algo} \\
					&\hat z_{t}^\mu = \begin{bmatrix} u^s_{t-1} \\ Y^{n+\mu+1} \alpha_t \end{bmatrix} \label{eq:mu_ahead_prediction} \\
					&z^s_t = \begin{bmatrix} u^s_t \\ y^s_t \end{bmatrix} = \Big(I_{m+p} - S^\dagger S\Big) \Big( \hat z_{t}^\mu - \gamma \nabla L_{t-1}\left(\hat z_{t}^\mu\right) \hspace{-2pt} \Big) \label{eq:OGD}\\
					&\beta_t \hspace{-3pt} = \hspace{-3pt}
					\begin{cases}
						&\hspace{-10pt} \arg\min_\beta \hspace{5ex}{\norm{Q\beta}} \\
						&\hspace{-10pt}\text{s.t. } H_\beta \beta = \hspace{-2pt} \begin{bmatrix} 0_{mn} \\  1_{n+1} {\otimes} u_t^s {-} U^{n+\mu+1:2n+\mu+1} \alpha_t  \\ 0_{pn} \\  1_{n} {\otimes} y_t^s - Y^{n+\mu+1:2n+\mu}\alpha_t  \end{bmatrix}
					\end{cases} \label{eq:defbeta_algo} 
					\\
					&\hat u_t = U^{n+1:n+\mu+1}(\alpha_t + \beta_t) \label{eq:PredInputs} \\
					&u_t =U^{n+1} (\alpha_t + \beta_t) \label{eq:OutputAlgo}
				\end{align}
				Measure $\tilde y_{t}$ and receive $L_{t}$ \\
				Set $t = t+1$ and go to \eqref{eq:defhate}
		}}
	}
	\vspace{-10pt}
\end{figure}

Roughly speaking, Algorithm~1 estimates the measurement noise by relying on its own predictions, applies online gradient descent (OGD) to estimate the optimal equilibrium of system~\eqref{eq:system_ss}, and calculates an input sequence that reaches the estimated optimal steady state. The whole procedure is illustrated in Fig.~\ref{fig:algo_sketch}.

In more detail, an estimate of the measurement noise is computed in~\eqref{eq:defhate} by comparing the measured output $\tilde y_{t-1}$ to the output predicted at the previous time step $Y^{n+1}(\alpha_{t-1} + \beta_{t-1})$. The estimated measurement noise is then used in combination with the last $n$ inputs $u_{[t-n:t-1]}$ and outputs $\tilde y_{[t-n:t-1]}$ in~\eqref{eq:defalpha_algo} to initialize a prediction step. As an input for prediction, we take the shifted previously predicted input sequence $\sigma \hat u_{t-1}$ and append it with the previously estimated optimal steady-state input $u^s_{t-1}$. Thus, $\alpha_t$ in~\eqref{eq:defalpha_algo} encodes the prediction at time $t$. In~\eqref{eq:mu_ahead_prediction}, the previously estimated steady-state input $u^s_{t-1}$ and the $\mu$-step ahead prediction $Y^{n+\mu+1}\alpha_t$ are collected in preparation for the projected OGD step in~\eqref{eq:OGD}, where the parameter $\mu$ can be interpreted as the prediction horizon of Algorithm~1. As common in OGD, we perform one gradient descent step in~\eqref{eq:OGD} based on the previous cost function $L_{t-1}$, since we do not have access to the current cost function $L_t$ yet. Note that multiplication by $I_{m+p} - S^\dagger S$ is equivalent to orthogonal projection onto the null space of $S$, which corresponds to the steady-state manifold by Lemma~\ref{lem:steady_states}. Thus, in~\eqref{eq:OGD}, we perform one online gradient descent step and project it onto the steady-state manifold of system~\eqref{eq:system_ss}. The resulting input-output pair $z^s_t$ can be regarded as an estimate for the optimal steady state. In~\eqref{eq:defbeta_algo}, we compute an input sequence which, if applied in addition to the input sequence used for prediction (i.e., $U\alpha_t$), reaches the estimated optimal steady state $z^s_t$ in $\mu$ steps and remains at $z_t^s$ for another $n+1$ steps in order to ensure that the unknown internal states $x_t$ of system~\eqref{eq:system_ss} reach the desired steady state. In order to be able to reach the estimated optimal steady state $z^s_t$ in $\mu$ time steps, we require the prediction horizon $\mu$ to be sufficiently long.
\begin{assumption} \label{assump:pred_horizon} The prediction horizon satisfies $\mu \geq \mu^*$, where $\mu^*$ is the controllability index of system~\eqref{eq:system_ss}, i.e.,
\[
	\text{rank}(\begin{bmatrix} B & AB & \dots & A^{\mu^*-1}B \end{bmatrix}) = n.	
\]
\end{assumption}
Note that $n \geq \mu^*$ always holds. Since we require an upper bound of the system order to be available, it is therefore possible to satisfy Assumption~\ref{assump:pred_horizon} without knowing $\mu^*$. However, simulations suggest that a shorter prediction horizon can sometimes be beneficial for the algorithm's performance, since decreasing the prediction horizon forces the algorithm to reach the desired steady state $z^s$ in less steps in~\eqref{eq:defbeta_algo}, resulting in a more aggressive controller. Finally, we update the predicted input sequence $\hat u_t$ in~\eqref{eq:PredInputs}. Note that $U^{n+1:n+\mu+1} \alpha_t = \begin{bmatrix} \sigma\hat u_{t-1} \\ u^s_{t-1} \end{bmatrix}$. Thus, the predicted input sequence is updated by shifting it, appending $u^s_{t-1}$, and adding the input sequence encoded in $\beta_t$, which steers the system to the new estimate of the optimal steady state $z_t^s$. Last, the first part of $\hat u_t$ is applied in~\eqref{eq:OutputAlgo} to system~\eqref{eq:system_ss}. Then, we measure the new (noisy) system output $\tilde y_t$, receive the cost function~$L_t$, and move to the next time step $t+1$.

The matrix $Q$ in the cost function of~\eqref{eq:defbeta_algo} can be tuned to achieve satisfactory performance, e.g., $Q=I$ minimizes the norm of $\beta$ and can be beneficial if there is process noise affecting the system~\eqref{eq:system_ss}, $Q=U$ minimizes the input difference needed to steer the system to $z^s_t$ (instead of $z^s_{t-1}$), and $Q=Y$ similarly minimizes the deviation of the system's output from the predicted output. Moreover, weighted combinations are possible by stacking the matrices $I$, $U$, and $Y$ in $Q$ (compare Section~\ref{sec:numerical_example}).

\begin{figure}
	\centering \small
	\begin{tikzpicture}
	\definecolor{mygreen}{rgb}{0.2,0.7,0.2}
	\clip (0,0) rectangle (8,5);
	
	\draw[red,dotted,thick] (0,2.5) -- (5,0); 
	
	\coordinate (theta) at (1,2);
	\draw (0.95,1.95) -- (1.05,2.05)
		  (0.95,2.05) -- (1.05,1.95);
	\node[fill=white,inner sep = 0] at (1.2,2.2) {$\theta_{t-1}$};
	\draw[dotted,rotate around={115:(theta)}] (theta) ellipse (2 and 1);
	\draw[dotted,rotate around={115:(theta)}] (theta) ellipse (4 and 2);
	\draw[dotted,rotate around={115:(theta)}] (theta) ellipse (6 and 3);
	\draw[dotted,rotate around={115:(theta)}] (theta) ellipse (8 and 4);
	\draw[dotted,rotate around={115:(theta)}] (theta) ellipse (10 and 5);
	
	\coordinate (yt-1) at (7,4);
	\node[fill=white,inner sep = 0] at (7.2,4.2) {$y_{t-1}$};
	\draw (6.95,3.95) -- (7.05,4.05)
		  (6.95,4.05) -- (7.05,3.95);
	
	\coordinate (haty) at (3,1);
	\node[fill=white,inner sep = 0] at (3.9,1.2) {$Y^{n+\mu+1}\alpha_t$};
	\draw (2.95,0.95) -- (3.05,1.05)
		  (2.95,1.05) -- (3.05,0.95);
	\node[fill=white,inner sep = 0] at (2.5,0.7) {$-\gamma\nabla L_{t-1}$};
		  
	\draw (yt-1) -- (6,4.25) -- (4.25,3) -- (3.2,1.5) -- (haty);
	\node[fill=white,inner sep = 0] at (4.4,2.8) {(a)};
	
	\coordinate (grad) at (2,0.9);
	\coordinate (ys) at (2.24,1.38);
	\draw[thick,->,blue,dashed] (haty) -- (grad);
	\draw[thick, dashed,blue] (grad) -- (ys);
	\draw (2.19,1.33) -- (2.29,1.43)
		  (2.19,1.43) -- (2.29,1.33);
	\node[fill=white,inner sep = 0] at (2.1,1.6) {$y^s_t$};
	
	\coordinate (yt) at (6,4.3);
	\draw (5.95,4.25) -- (6.05,4.35)
		  (5.95,4.35) -- (6.05,4.25);
	\node[fill=white,inner sep = 0] at (6,4.5) {$y_t$};
	\draw[thick,mygreen] (yt-1) -- (yt);
	\draw (yt) -- (4.35,3.5) -- (2.5,2) -- (ys);
	\node[fill=white,inner sep = 0] at (3.1,2.8) {(b)};
\end{tikzpicture}
	\caption{Schematic illustration of Algorithm~1. The previous cost function is depicted by its level sets (dotted) together with the steady-state manifold (red, dotted). Algorithm~1 predicts the output $\mu$-steps ahead (a), performs one projected online gradient descent step (blue, dashed), updates the input sequence (b), and applies the first part of the updated sequence to the system (green).}
	\label{fig:algo_sketch}
	\vspace{-15pt}
\end{figure}
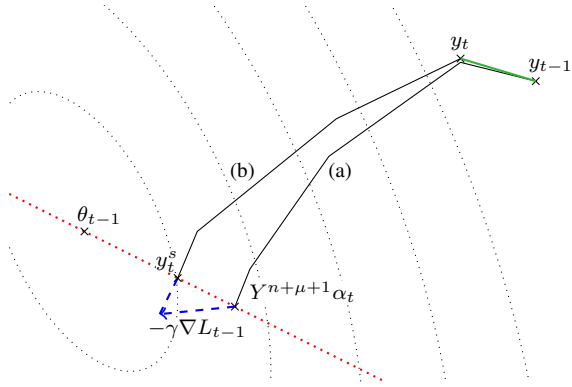 

Since we compute input-output sequences of length \mbox{$2n+\mu+1$} in Algorithm~1 ($n$ steps for initialization, $\mu$ steps for prediction, and $n+1$ steps as a terminal constraint ensuring steady-state operation), by Theorem~\ref{thm:fundamental_lemma} we need persistency of excitation of order $3n+\mu+1$.
\begin{assumption} \label{assump:persistency_excitation}
	The input $u^d$ of the data sequence is persistently exciting of order $3n+\mu+1$.
\end{assumption}
Note that persistency of excitation of order $3n+\mu+1$ requires a data sequence of length 
\[
	N \geq (m+1)(3n+\mu+1)-1.
\]

Finally, we derive explicit formulas to solve \eqref{eq:defalpha_algo} and \eqref{eq:defbeta_algo} in Algorithm~1. In order to do so, we need to ensure that \eqref{eq:defalpha_algo} and \eqref{eq:defbeta_algo} always have a feasible solution, which is guaranteed if their respective right-hand sides describe (parts of) valid input-output sequences of system~\eqref{eq:system_ss} by Theorem~\ref{thm:fundamental_lemma}.
\begin{lemma} \label{lem:feasibility} Let Assumptions~\ref{assump:system_properties}, \ref{assump:noisefree_offline_data}-\ref{assump:persistency_excitation} be satisfied and assume that the initialization $u_{[-n:-1]}$, $\tilde y_{[-n,-1]} - \hat e_{[-n,-1]}$ is an input/output sequence of system~\eqref{eq:system_ss}. Then, \eqref{eq:defalpha_algo} and \eqref{eq:defbeta_algo} have a feasible solution for all $t \in \mathbb I_{\geq 0}$.
\end{lemma}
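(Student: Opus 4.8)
The plan is to reduce both feasibility claims to Theorem~\ref{thm:fundamental_lemma}. By Assumptions~\ref{assump:noisefree_offline_data} and~\ref{assump:persistency_excitation}, $u^d$ is persistently exciting of order $(2n+\mu+1)+n$ and $y^d$ is noise free, so the column space of $\begin{bmatrix} U \\ Y\end{bmatrix}$ equals the set of all length-$(2n+\mu+1)$ input--output trajectories of~\eqref{eq:system_ss}. Since $H_\alpha$ and $H_\beta$ are each obtained by selecting block rows of $\begin{bmatrix} U \\ Y\end{bmatrix}$, the equation $H_\alpha\alpha_t = b$ (respectively $H_\beta\beta_t = b$) is solvable if and only if $b$ is the restriction, to the corresponding block rows, of some length-$(2n+\mu+1)$ trajectory of~\eqref{eq:system_ss}. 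Hence the proof amounts to exhibiting, for each $t$, such trajectories for the right-hand sides of~\eqref{eq:defalpha_algo} and~\eqref{eq:defbeta_algo}, which I would do by induction on~$t$.

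Throughout the induction I would carry the auxiliary claim that the window $\left(u_{[t-n:t-1]},\ \tilde y_{[t-n:t-1]} - \hat e_{[t-n:t-1]}\right)$ is a genuine length-$n$ trajectory of~\eqref{eq:system_ss}; for $t=0$ this is exactly the standing hypothesis on the initialization. Given this claim, feasibility of~\eqref{eq:defalpha_algo} is immediate: a valid trajectory extended by appending an arbitrary input sequence (here $\sigma\hat u_{t-1}$ followed by $1_{n+1}\otimes u^s_{t-1}$) is again a valid trajectory, since the unknown internal state evolves deterministically, and the right-hand side of~\eqref{eq:defalpha_algo} is precisely the (all inputs, first $n$ outputs) part of this extended trajectory. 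To propagate the auxiliary claim from $t$ to $t+1$, the key observation is that the closed-loop window at time $t+1$ is a contiguous sub-trajectory of the length-$(2n+\mu+1)$ trajectory $\mathcal T_t := \begin{bmatrix} U \\ Y\end{bmatrix}(\alpha_t+\beta_t)$: using $U^{1:n}\beta_t = 0$ and $Y^{1:n}\beta_t = 0$ from~\eqref{eq:defbeta_algo}, the relation $U^{n+1}(\alpha_t+\beta_t) = u_t$ from~\eqref{eq:OutputAlgo}, the defining relation $\tilde y_t - \hat e_t = Y^{n+1}(\alpha_t+\beta_t)$ from~\eqref{eq:defhate}, and~\eqref{eq:defalpha_algo}, one reads off that the block rows $2,\dots,n+1$ of $\mathcal T_t$ equal $\left(u_{[t-n+1:t]},\ \tilde y_{[t-n+1:t]} - \hat e_{[t-n+1:t]}\right)$, which is a length-$n$ trajectory because any contiguous block of a trajectory is again a trajectory.

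It remains to establish feasibility of~\eqref{eq:defbeta_algo}. Since $\begin{bmatrix} U \\ Y\end{bmatrix}\alpha_t$ is itself a valid trajectory, by the reduction above it is equivalent to find a length-$(2n+\mu+1)$ trajectory with the same first $n$ input and output blocks as $\alpha_t$ --- that is, the valid window $\left(u_{[t-n:t-1]},\ \tilde y_{[t-n:t-1]} - \hat e_{[t-n:t-1]}\right)$ --- whose last $n+1$ input blocks all equal $u^s_t$ and whose output blocks $n+\mu+1,\dots,2n+\mu$ all equal $y^s_t$; subtracting the trajectory of $\alpha_t$ then produces a feasible $\beta_t$. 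This is a reachability statement. The length-$n$ window determines a unique terminal state $x$ (observability, Assumption~\ref{assump:system_properties}); $z^s_t = (u^s_t, y^s_t)$ lies in the null space of $S$ by~\eqref{eq:OGD}, hence is an equilibrium of~\eqref{eq:system_ss} by Lemma~\ref{lem:steady_states} and Definition~\ref{def:steady_state}, so (again invoking observability) there is an equilibrium state $x^s$ with $x^s = Ax^s + Bu^s_t$ and $y^s_t = Cx^s + Du^s_t$; and since $\mu\geq\mu^*$ by Assumption~\ref{assump:pred_horizon}, $\begin{bmatrix} B & AB & \cdots & A^{\mu-1}B\end{bmatrix}$ has full row rank, so some input sequence of length $\mu$ steers $x$ to $x^s$. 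Concatenating the length-$n$ window, these $\mu$ inputs, and $n+1$ copies of $u^s_t$ yields a trajectory whose state rests at $x^s$ over the last $n+1$ steps, so its outputs there equal $y^s_t$, as required.

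I expect the main obstacle to be the bookkeeping in the inductive step: carefully tracking the shifted block indices and using~\eqref{eq:defhate}--\eqref{eq:OutputAlgo} to verify that the closed-loop signals $u_t$, $\tilde y_t - \hat e_t$, $\hat u_t$, $u^s_t$ coincide with the appropriate block rows of $\mathcal T_t$, so that the ``valid length-$n$ window'' hypothesis is genuinely reproduced at time $t+1$ rather than merely asserted. The reachability construction behind feasibility of~\eqref{eq:defbeta_algo}, although it is the only place where controllability and the bound $\mu\geq\mu^*$ are actually used, is then routine.
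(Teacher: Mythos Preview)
Your proposal is correct and follows essentially the same inductive strategy as the paper: carry the hypothesis that the length-$n$ window is a valid trajectory, deduce feasibility of~\eqref{eq:defalpha_algo} from the fundamental lemma, use~\eqref{eq:defhate}--\eqref{eq:OutputAlgo} to propagate the window hypothesis, and establish feasibility of~\eqref{eq:defbeta_algo} via controllability and the steady-state property of $z^s_t$. The only cosmetic difference is in the $\beta_t$ construction: the paper builds $\beta_t$ as a difference $\beta_t'-\beta_t''$ of two zero-initialized trajectories (one reaching $(u^s_t,y^s_t)$, the other matching the tail of $\alpha_t$), whereas you construct a single trajectory from the actual initial window to the equilibrium and subtract $\alpha_t$ --- by linearity these are equivalent.
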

\begin{proof}
Assume that at time step $t$, $u_{[t-n:t-1]}$ and $\tilde y_{[t-n:t-1]} - \hat e_{[t-n:t-1]}$ are a valid $n$-step trajectory of system~\eqref{eq:system_ss}. Then, $\alpha_t$ can be chosen according to~\eqref{eq:defalpha_algo}, compare~\cite{Markovsky2008}. Moreover, there exists $\beta_t'$ such that 
\[
	H_\beta \beta_t' = \begin{bmatrix} 0_{mn} \\ 1_{n+1} \otimes u^s_t \\ 0_{pn} \\ 1_n \otimes y^s_t \end{bmatrix}
\]
by Assumption~\ref{assump:pred_horizon}, since system~\eqref{eq:system_ss} can be steered from $0$ to any steady state in $\mu$ steps by controllability, and $\beta_t''$ such that 
\[
	H_\beta \beta_t'' = \begin{bmatrix} 0_{mn} \\ U^{n+\mu+1:2n+\mu+1} \alpha_t \\ 0_{pn} \\ Y^{n+\mu+1:2n+\mu} \alpha_t \end{bmatrix}
\]
by Assumption~\ref{assump:pred_horizon} and because $\alpha_t$ encodes a valid input-output sequence. Since sums of input-output sequences of a linear time-invariant system are input-output sequences of the same system, there exists a solution to~\eqref{eq:defbeta_algo} at time step $t$ given by $\beta_t = \beta'_t - \beta_t''$. Then, at time step $t+1$, the right-hand side of \eqref{eq:defalpha_algo} is a valid input-output sequence because of Theorem~\ref{thm:fundamental_lemma} and
\begin{align*}
	U^{2:n+1} (\alpha_{t} + \beta_{t}) &\refeq{\eqref{eq:defalpha_algo},\eqref{eq:OutputAlgo}} u_{[t-n+1:t]} \\
	Y^{2:n+1} (\alpha_{t} + \beta_{t}) &\refeq{\eqref{eq:defhate}} \tilde y_{[t-n+1:t]} - \hat e_{[t-n+1:t]}.
\end{align*}
Thus, \eqref{eq:defalpha_algo} and \eqref{eq:defbeta_algo} always have a solution by induction if the algorithm is initialized with a valid input-output sequence $u_{[-n:-1]}$ and $\tilde y_{[-n,-1]} - \hat e_{[-n,-1]}$. 
\end{proof}
Lemma~\ref{lem:feasibility} states that we need a feasible initialization $u_{[-n:-1]}$, $\tilde y_{[-n:-1]}-\hat e_{[-n:-1]}$. Thus, in the following, we assume that Algorithm~1 is initialized correctly. This can be ensured, e.g., by choosing $u_{[-n:-1]} = 0$ and \mbox{$\hat e_{[-n:-1]} = \tilde y_{[-n:-1]}$} or by solving
\begin{align*}
	(\alpha_0,\hat e_{[-n:-1]}) &= \\
	\arg \min_{\alpha,\hat e} \quad &\norm{Y^{1:n} \alpha - (\tilde y_{[-n:-1]} - \hat e_{[-n:-1]})} {+} \lambda \norm{\begin{bmatrix} \alpha \\ \hat e\end{bmatrix}} \\
	\text{s.t.} \quad &U\alpha = \begin{bmatrix} u_{[-n:-1]} \\ \sigma \hat u_{-1} \\ 1_{n+1} \otimes u^s_{t-1} \end{bmatrix},
\end{align*}
where $\lambda \in \mathbb R_{\geq 0}$ is a weighting factor, for some initialization $\hat u_{-1}$, $u^s_{-1}$, instead of solving \eqref{eq:defhate}~-~\eqref{eq:defalpha_algo} at time step $t=0$.

Note that one solution to \eqref{eq:defalpha_algo} is given by the pseudo-inverse
\[
	\alpha_t = H_\alpha^\dagger \begin{bmatrix} u_{[t-n:t-1]} \\ \hdashline \sigma \hat u_{t-1} \\ 1_{n+1} \otimes u^s_{t-1} \\ \hdashline \tilde y_{[t-n:t-1]} - \hat e_{[t-n:t-1]} \end{bmatrix}. \numberthis \label{eq:defalpha}
\]
Moreover, if $Q^\top Q \succ 0$, i.e., $Q^\top Q$ is positive definite, then the unique solution to \eqref{eq:defbeta_algo} is given by the weighted pseudo-inverse \cite{Elden1982}
\[
	\beta_t = \left( I_{N-2n-\mu} - \left( Q \left( I_{N-2n-\mu} - H_\beta^\dagger H_\beta \right)\right)^\dagger Q \right) H_\beta^\dagger g_t, \numberthis \label{eq:defbeta} 
\]
where $g_t$ is the right-hand side of \eqref{eq:defbeta_algo}
\[
	g_t = \begin{bmatrix} 0_{mn} \\  1_{n+1} {\otimes} u_t^s {-} U^{n+\mu+1:2n+\mu+1} \alpha_t  \\ 0_{pn} \\  1_{n} {\otimes} y_t^s - Y^{n+\mu+1:2n+\mu}\alpha_t  \end{bmatrix}. \numberthis \label{eq:defgt}
\]
If $Q^\top Q \succeq 0$ is only positive semidefinite, then the solution to~\eqref{eq:defbeta_algo} is not unique and~\eqref{eq:defbeta} is only one possible solution. In the following, we assume that~\eqref{eq:defbeta_algo} is solved using~\eqref{eq:defbeta} in both cases. Thus, the necessary online computations in Algorithm~1 reduce to one gradient evaluation and multiple matrix-vector multiplications.

\section{THEORETICAL RESULTS} \label{sec:theoretical_results}
In this section, we discuss theoretical guarantees for Algorithm~1, in particular a bound on the regret~$\mathcal R$. In order to derive such a bound, we first analyze the error estimates $\hat e$. Lemma~\ref{lem:error_estimate_convergence} states that the measurement error estimates $\hat e$ converge to the true measurement error $e$. Thus, Algorithm~1 is able to (asymptotically) exactly recover the measurement error $e$ and control the true system output $y_t$, even though only noisy measurements $\tilde y_t$ are available at each time step.

\begin{lemma} \label{lem:error_estimate_convergence} Let Assumptions~\ref{assump:system_properties},~\ref{assump:noisefree_offline_data}-\ref{assump:persistency_excitation} be satisfied and assume that the initialization $u_{[-n:-1]}$, $\tilde y_{[-n,-1]} - \hat e_{[-n,-1]}$ is an input/output sequence of system~\eqref{eq:system_ss}. Then, the error of the measurement noise estimates $\hat e - e$ follows the unforced system dynamics, i.e., $\hat e - e$ is an output of \eqref{eq:system_ss} with $u \equiv 0$ and
\[
	\lim_{t\rightarrow\infty} \hat e_t - e_t = 0.
\]
\end{lemma}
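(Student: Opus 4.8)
The plan is to exploit the fact that, by construction in Algorithm~1, the quantities $\alpha_t$ and $\beta_t$ always encode genuine input-output trajectories of system~\eqref{eq:system_ss} (this is exactly what Lemma~\ref{lem:feasibility} and Theorem~\ref{thm:fundamental_lemma} provide), and then to compare the ``predicted'' output with the ``true'' output evolving from the same applied input. First, I would fix notation: let $\hat y_t := Y^{n+1}(\alpha_t + \beta_t)$ denote the output predicted by the algorithm for time $t$, so that by~\eqref{eq:defhate} the estimate satisfies $\hat e_{t} = \tilde y_{t} - \hat y_{t}$, and hence $\hat e_t - e_t = \tilde y_t - \hat y_t - (\tilde y_t - y_t) = y_t - \hat y_t$. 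Thus the claim reduces to showing that the prediction error $y_t - \hat y_t$ follows the unforced dynamics of~\eqref{eq:system_ss} and converges to zero, which, given Schur stability of $A$ (Assumption~\ref{assump:system_properties}), will follow once I show $y_t - \hat y_t$ is the output of~\eqref{eq:system_ss} driven by $u \equiv 0$ from some initial state.

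The key step is to identify the state associated with the predicted trajectory and show the error dynamics are autonomous. Because $\alpha_t + \beta_t$ encodes a valid length-$(2n+\mu+1)$ trajectory of~\eqref{eq:system_ss} whose input block is, by~\eqref{eq:defalpha_algo}, \eqref{eq:OutputAlgo}, and \eqref{eq:PredInputs}, \emph{exactly} the input that the algorithm actually applied over the relevant window (i.e.\ $U^{1:n+1}(\alpha_t+\beta_t)$ equals the true applied inputs $u_{[t-n:t]}$), both the true system and the ``predicted'' system are driven by the \emph{same} input over this window. The true output over that window is $\tilde y_{[t-n:t]} - e_{[t-n:t]}$ and the algorithm initializes prediction in~\eqref{eq:defalpha_algo} using $\tilde y_{[t-n:t-1]} - \hat e_{[t-n:t-1]}$. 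Subtracting, the difference of the two initial output windows is precisely $\hat e_{[t-n:t-1]} - e_{[t-n:t-1]}$, and since the inputs cancel, the difference trajectory $y_s - \hat y_s$ for $s \in \mathbb I_{[t-n,t]}$ is a trajectory of~\eqref{eq:system_ss} with $u \equiv 0$. By observability (Assumption~\ref{assump:system_properties}), $n$ consecutive outputs of the unforced system determine the underlying state, so there is a well-defined ``error state'' $\xi_t \in \mathbb R^n$ with $\xi_{t+1} = A\xi_t$ and $y_t - \hat y_t = C\xi_t$; hence $\hat e_t - e_t = C A^{t - t_0}\xi_{t_0}$ for an appropriate initial index, which tends to zero because $A$ is Schur. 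I would make the induction in $t$ explicit here, piggybacking on the induction already carried out in the proof of Lemma~\ref{lem:feasibility}, to confirm that the shift-and-append structure of $\hat u_t$ in~\eqref{eq:PredInputs} keeps the predicted input window synchronized with the applied input window at every step.

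The main obstacle I anticipate is the bookkeeping around the ``$\beta_t$ correction'': one must verify that adding $\beta_t$ in~\eqref{eq:OutputAlgo} and~\eqref{eq:PredInputs} does not break the synchronization between predicted and applied inputs over the $n$-step initialization block used at the \emph{next} time step. Concretely, I need $U^{2:n+1}(\alpha_t + \beta_t) = u_{[t-n+1:t]}$ and $Y^{2:n+1}(\alpha_t+\beta_t) = \tilde y_{[t-n+1:t]} - \hat e_{[t-n+1:t]}$, which are exactly the two identities already established at the end of the proof of Lemma~\ref{lem:feasibility}; so the crux is to reuse those identities carefully and observe that they say the ``true'' window and the ``predicted'' window differ by exactly $\hat e - e$ on the output and by zero on the input. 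Once that alignment is in hand, the autonomy of the error dynamics and the convergence are immediate from Assumption~\ref{assump:system_properties}. A secondary minor point is handling the first $n$ steps, where the claimed property holds by the assumed valid initialization $u_{[-n:-1]}$, $\tilde y_{[-n:-1]} - \hat e_{[-n:-1]}$; this furnishes the base case of the induction.
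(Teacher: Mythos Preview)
Your proposal is correct and follows essentially the same approach as the paper. The paper introduces the auxiliary quantity $\alpha_t^* = H_\alpha^\dagger [\,u_{[t-n:t-1]}^\top,\ (\sigma\hat u_{t-1})^\top,\ (1_{n+1}\otimes u^s_{t-1})^\top,\ y_{[t-n:t-1]}^\top\,]^\top$ and works with $\epsilon_t = \alpha_t - \alpha_t^*$, showing $U\epsilon_t = 0$ and $Y^{1:n}\epsilon_t = Y^{2:n+1}\epsilon_{t-1}$ directly in the Hankel-matrix framework; your argument is the same observation expressed in terms of the predicted output $\hat y_t = Y^{n+1}(\alpha_t+\beta_t)$ and an error state $\xi_t$ obtained via observability, with the recursion supplied by the very identities from Lemma~\ref{lem:feasibility} that you already flagged.
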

\begin{proof}
	For every $t\geq 0$, let
	\[
		\alpha_t^* = H_\alpha^\dagger\begin{bmatrix} u_{[t-n:t-1]} \\ \sigma \hat u_{t-1} \\ 1_{n+1} \otimes u^s_{t-1} \\ y_{[t-n:t-1]} \end{bmatrix},
	\]
	i.e., the prediction with the real outputs $y_{[t-n:t-1]}$, compare~\eqref{eq:defalpha_algo}, and
	\[
		\epsilon_t = \alpha_t - \alpha_t^*. \numberthis \label{eq:def_epsilon}
	\]
	Note that $\alpha_t$ is well-defined at all times due to Lemma~\ref{lem:feasibility}. Then, we have $U\epsilon_t = 0$ by definition of $\epsilon_t$ and
	\begin{align*}
	Y^{1:n} \epsilon_t &\refeq{\eqref{eq:defalpha_algo}} \tilde y_{[t-n:t-1]} - \hat e_{[t-n:t-1]} - y_{[t-n:t-1]} \\
	&\refeq{\eqref{eq:system_ss}} e_{[t-n:t-1]} - \hat e_{[t-n:t-1]}. \numberthis \label{eq:estimate_recursion}
	\end{align*}
	Moreover, $y_{t-1} = Y^{n+1} (\alpha_{t-1}^* + \beta_{t-1})$ by Theorem~\ref{thm:fundamental_lemma}, i.e., the coefficients $\alpha_{t-1}^* + \beta_{t-1}$ encode the true initialization $(u_{[t-n-1:t-2]},y_{[t-n-1:t-2]})$ and input $U^{n+1} (\alpha_{t-1}^* + \beta_{t-1}) = U^{n+1} (\alpha_{t-1} + \beta_{t-1}) \refeq{\eqref{eq:OutputAlgo}} u_{t-1}$ and, therefore, predict the output $y_{t-1}$ correctly. Combining this fact with~\eqref{eq:estimate_recursion} yields
	\begin{align*}
		Y^{n} \epsilon_t &= \tilde y_{t-1} - \hat e_{t-1} - y_{t-1} \\
		&\refeq{\eqref{eq:defhate}} Y^{n+1} (\alpha_{t-1}+\beta_{t-1}) - y_{t-1} \\
		&= Y^{n+1} (\alpha_{t-1} - \alpha_{t-1}^*) = Y^{n+1} \epsilon_{t-1},
	\end{align*}
	which implies
	\begin{equation}
		Y^{1:n} \epsilon_t \refeq{\eqref{eq:estimate_recursion}} Y^{2:n+1} \epsilon_{t-1}. \label{eq:error_error_dynamics}
	\end{equation}
	Combining the above results $U\epsilon_t = 0$ and \eqref{eq:error_error_dynamics}, we conclude that the error sequence $e_t - \hat e_{t}$ follows the unforced system dynamics. In more detail, at each time step $t$ the sequence generated by $Y\epsilon_t$ by~\eqref{eq:error_error_dynamics} is initialized by the endpiece of the initialization of $Y\epsilon_{t-1}$ (i.e., $Y^{2:n}\epsilon_{t-1}$) appended with the one step ahead prediction at time $t-1$ (i.e., $Y^{n+1}\epsilon_{t-1}$). Therefore, we have by Theorem~\ref{thm:fundamental_lemma} and $U\epsilon_t=0$ for all $t$ that $e_t - \hat e_t$ is the output of a trajectory of the unforced system for all $t$. Since the unforced system dynamics are stable by Assumption~\ref{assump:system_properties}, we obtain the result $\lim_{t\rightarrow\infty} e_t - \hat e_t = 0$.
\end{proof}

Next, we are able to derive an upper bound on the regret~$\mathcal R$ as stated in Theorem~\ref{thm:regret_bound}.

\begin{theorem} \label{thm:regret_bound} Let Assumptions~\ref{assump:system_properties}-\ref{assump:persistency_excitation} be satisfied and choose $0 < \gamma \leq \frac{2}{\alpha_z+l_z}$. Moreover, assume that the initialization $u_{[-n:-1]}$, $\tilde y_{[-n,-1]} - \hat e_{[-n,-1]}$ is an input/output sequence of system~\eqref{eq:system_ss}. Then, the regret~\eqref{eq:def_regret} can be upper bounded by
\[
	\mathcal R \leq C_\mu + C_\zeta \sum_{t=0}^T \norm{\zeta_{t}-\zeta_{t-1}} + C_e E_0,	
\]
where $E_0 = \norm{e_{[-n:-1]} - \hat e_{[-n:-1]}}$ and $C_\mu, C_\zeta, C_e < \infty$ are constants independent of $T$ and $\zeta_{-1} = z^s_{-1}$.
\end{theorem}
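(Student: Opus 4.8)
The plan is to decompose the instantaneous regret $L_t(u_t,y_t) - L_t(\zeta_t)$ via an intermediate term involving the algorithm's steady-state estimate $z^s_t$, writing
\[
	L_t(u_t,y_t) - L_t(\zeta_t) = \big(L_t(u_t,y_t) - L_t(z^s_t)\big) + \big(L_t(z^s_t) - L_t(\zeta_t)\big).
\]
For the first bracket, I would use Lipschitz continuity of $L_t$ to bound it by $L_z \norm{(u_t,y_t) - z^s_t}$. This tracking error should be controlled by showing that, at each time step, the true system trajectory is driven toward the current steady-state estimate $z^s_t$: by construction of $\beta_t$ in~\eqref{eq:defbeta_algo} together with the terminal equality constraints (the $1_{n+1}\otimes u^s_t$ and $1_n \otimes y^s_t$ blocks), the predicted input sequence reaches $z^s_t$ in $\mu$ steps and holds it. The deviation of the actual from the predicted trajectory is governed by (i) the change $\norm{z^s_t - z^s_{t-1}}$ of the estimate between consecutive steps (since only one new input is applied per step while the rest of the horizon is re-planned), and (ii) the measurement-noise estimation error $\hat e_t - e_t$, which by Lemma~\ref{lem:error_estimate_convergence} follows the stable unforced dynamics and hence is summable, contributing $\le c\, E_0$ for some constant. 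Summing the geometric/stable transients over $t$ converts these per-step quantities into the $C_\zeta \sum \norm{\zeta_t - \zeta_{t-1}}$ and $C_e E_0$ terms, once I also relate $\norm{z^s_t - z^s_{t-1}}$ back to $\norm{\zeta_t - \zeta_{t-1}}$ plus the noise error (see below).

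For the second bracket, $L_t(z^s_t) - L_t(\zeta_t)$, the key is that $z^s_t$ is obtained from a single projected online gradient descent step on $L_{t-1}$ starting from the $\mu$-step prediction $\hat z^\mu_t$, and that projection onto the null space of $S$ is, by Lemma~\ref{lem:steady_states}, exactly projection onto the steady-state manifold — the same constraint set over which $\zeta_t$ is the minimizer. So this is the classical OCO/OGD analysis for strongly convex, smooth functions over a convex (here, linear) constraint set. The standard argument: with step size $0 < \gamma \le 2/(\alpha_z + l_z)$, the projected gradient map is a contraction toward the minimizer of the \emph{current} objective, so $\norm{z^s_t - \zeta_{t-1}} \le (1 - \gamma\alpha_z\,\text{-ish factor})\norm{\hat z^\mu_t - \zeta_{t-1}}$; then $\norm{z^s_t - \zeta_t} \le \text{(contraction)}\norm{\hat z^\mu_t - \zeta_{t-1}} + \norm{\zeta_t - \zeta_{t-1}}$, and $\hat z^\mu_t$ is itself close to $z^s_{t-1}$ up to the same transient/noise errors as above. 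Telescoping this recursion across $t$ yields a bound proportional to $\sum_t \norm{\zeta_t - \zeta_{t-1}}$ plus a constant (absorbing the initial error $\norm{\zeta_{-1} - z^s_{-1}}$ and the summable noise contribution into $C_\mu$ and $C_e E_0$). Strong convexity is then used — as in economic MPC regret arguments — to also upper bound $L_t(z^s_t) - L_t(\zeta_t)$ by a multiple of $\norm{z^s_t - \zeta_t}^2$ or $\norm{z^s_t - \zeta_t}$, so that the summable sequence in $t$ gives a finite contribution; the terms that are genuinely summable collapse into the $T$-independent constant $C_\mu$.

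Concretely, the steps in order are: (1) establish the stable error recursion for $\hat e_t - e_t$ and its summability (already given by Lemma~\ref{lem:error_estimate_convergence}, but I need the quantitative $\sum_t \norm{\hat e_t - e_t} \le c\,E_0$); (2) bound the prediction mismatch $\norm{\hat z^\mu_t - z^s_{t-1}}$ and the trajectory-tracking error $\norm{(u_t,y_t) - z^s_t}$ in terms of $\norm{z^s_t - z^s_{t-1}}$ and $\norm{\hat e_t - e_t}$, exploiting Schur stability of $A$ to make the internal-state transients decay geometrically; (3) carry out the OGD contraction analysis on~\eqref{eq:OGD} using Lemma~\ref{lem:steady_states} to identify the projection, obtaining $\norm{z^s_t - \zeta_t} \le \rho\,\norm{z^s_{t-1} - \zeta_{t-1}} + (\text{error terms}) + \norm{\zeta_t - \zeta_{t-1}}$ with $\rho < 1$; (4) solve this recursion and sum over $t = 0,\dots,T$; (5) combine with Lipschitz continuity and strong convexity to assemble $\mathcal R \le C_\mu + C_\zeta \sum_{t=0}^T \norm{\zeta_t - \zeta_{t-1}} + C_e E_0$, collecting all constants. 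The main obstacle I anticipate is step (2)–(3) glued together: the tracking error, the prediction error, and the OGD iterate error are mutually coupled across time (the algorithm re-plans the whole horizon each step, and the initialization of~\eqref{eq:defalpha_algo} uses the noisy-minus-estimated outputs), so I will need a single coupled recursion in a suitable error vector — likely stacking $\norm{z^s_t - \zeta_t}$, a bound on the predicted-versus-actual trajectory, and $\norm{\hat e_t - e_t}$ — and then argue the corresponding system matrix is Schur stable (using $\gamma \le 2/(\alpha_z + l_z)$ for the OGD block and Assumption~\ref{assump:system_properties} for the dynamics block) so that the whole thing can be summed. Tracking the constants carefully enough to see they are $T$-independent, while not needed in full detail here, is where the bookkeeping will be heaviest.
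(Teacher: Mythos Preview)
Your high-level structure --- OGD contraction for the steady-state estimate, a tracking-error bound, and summability of the noise-estimate error via Lemma~\ref{lem:error_estimate_convergence} --- matches the paper, but your decomposition point and the mechanics of the tracking step differ substantially, and your step~(2) has a gap.

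The paper does not split at $z^s_t$. Instead it shifts time by $\mu$ and splits at the $\mu$-step prediction $\hat z^\mu_t$: after Lipschitz continuity one gets
\[
\mathcal R \le C_\mu + L_z\sum_{t}\norm{z_{t+\mu}-\hat z^\mu_t} + L_z\sum_t\norm{\hat z^\mu_t-\zeta_{t-1}} + L_z(\mu{+}1)\sum_t\norm{\zeta_t-\zeta_{t-1}}.
\]
The reason for this choice is the exact identity $\hat z^\mu_t = z^s_{t-1}$, which follows from the terminal equalities in~\eqref{eq:defbeta_algo} together with a one-step shift relation $Y^{n+1:2n+\mu}\alpha_t = Y^{n+2:2n+\mu+1}(\alpha_{t-1}+\beta_{t-1})$. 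With this, the second sum satisfies a self-referential inequality (via the projected-gradient contraction $\norm{z^s_t-\zeta_{t-1}}\le\kappa\norm{\hat z^\mu_t-\zeta_{t-1}}$, $\kappa=1-\alpha_z\gamma$) that solves immediately to $\tfrac{1}{1-\kappa}\sum_t\norm{\zeta_t-\zeta_{t-1}}$. The first sum is handled by writing $u_{t+j}$ as the input planned at time $t$ plus the accumulated corrections $\beta_t,\ldots,\beta_{t+j}$, bounding $\norm{\beta_t}\le C\norm{z^s_t-z^s_{t-1}}$, and adding the noise contribution separately via Lemma~\ref{lem:error_estimate_convergence} and observability to produce the $C_eE_0$ term. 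The three pieces are bounded independently and summed; there is no coupled Schur-stable recursion to analyze.

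Your step~(2) is where this matters. You want to bound $\norm{(u_t,y_t)-z^s_t}$, but $(u_t,y_t)$ is the \emph{first} step of a plan that reaches $z^s_t$ only after $\mu$ further steps; at time $t$ the closed loop is near $z^s_{t-\mu-1}$ (modulo re-planning and noise), not near $z^s_t$. To control $\norm{z_t-z^s_t}$ you would have to unroll the algorithm $\mu$ steps back, which after re-indexing is exactly the paper's time shift --- so your route works only once you rediscover that device. Relatedly, your appeal to ``geometric decay via Schur stability of $A$'' for the tracking error is not how this algorithm converges: convergence to the planned steady state is deadbeat in $\mu$ steps through the choice of $\beta_t$, not exponential; Schur stability is used in the paper solely for the noise-estimate dynamics in Lemma~\ref{lem:error_estimate_convergence}. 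One minor point: strong convexity gives a \emph{lower} bound on $L_t(z^s_t)-L_t(\zeta_t)$ since $\zeta_t$ is the constrained minimizer; the upper bound comes from Lipschitz continuity, which is what the paper uses throughout (strong convexity and smoothness enter only through the contraction rate $\kappa$).
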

The proof is given in the appendix. The upper bound on the regret depends on constants, which in turn depend on system and problem parameters, $\sum_{t=0}^T \norm{\zeta_{t}-\zeta_{t-1}}$, and $E_0 = \norm{e_{[-n:-1]} - \hat e_{[-n:-1]}}$, i.e., the initialization error of the measurement error estimates. The quantity $\sum_{t=0}^T \norm{\zeta_{t}-\zeta_{t-1}}$, commonly termed path length in the literature \cite{Li2021a}, can be regarded as a measure of the variation of the cost functions. A bound which depends on the variation of the cost functions is to be expected, since in our framework, the cost function $L_t$ is only available to the algorithm at time step $t+1$, i.e., there is a one-step delay between the cost function becoming active and being used to control the system. Thus, it is impossible to achieve low regret if the cost functions vary too frequently. A bound which depends linearly on $\sum_{t=0}^T \norm{\zeta_{t}-\zeta_{t-1}}$ is well aligned with other results on dynamic regret in the literature, compare, e.g., \cite{Nonhoff2021,Li2019}. A sublinear regret can therefore be achieved if the path length is sublinear in $T$. Moreover, introduction of measurement noise to the control problem only introduces an additional \textit{constant} term $C_e E_0$ in the regret upper bound compared to \cite{Nonhoff2021CDC}. This is due to the convergence of $\hat e$ to $e$ as shown in Lemma~\ref{lem:error_estimate_convergence}. 

Finally, consider the case where the optimal steady state is constant, i.e., $\zeta_t = \zeta_{t'}$ for some $t' \in \mathbb I_{\geq 0}$ and all $t\geq t'$. Following the proof of Theorem~\ref{thm:regret_bound}, it can be shown that
\[
	\sum_{t=0}^T \norm{z_t - \zeta_t} \leq C_\mu' + C_\zeta' \sum_{t=0}^T \norm{\zeta_{t}-\zeta_{t-1}} + C_e E_0,
\]
where $C_\mu', C_\zeta' < \infty$ are again constants independent of $T$. Thus,
\[
	\sum_{t=t'+1}^T \norm{z_t - \zeta_t} \leq C_\mu' + C_e E_0,
\]
which implies that, in the case that the optimal steady state is constant, the closed loop with Algorithm~1 asymptotically converges to the optimal steady state.

\begin{remark} \label{rem:unstable_systems} (Unstable systems)
	Suppose that Assumption~\ref{assump:system_properties} is not satisfied because the system is not Schur stable. In this case, it is possible to design a linear stabilizing feedback \cite{DePersis2020,Berberich2020arXiv} and apply Algorithm~1 to the stabilized system, as discussed above. In particular, at each time step $t$, $u_t + v_t$ is applied to the system, where $u_t$ is the input computed in \eqref{eq:OutputAlgo} and 
	\begin{align*}
		v_t &= v^y_t + v^e_t = K\begin{bmatrix} v_{[t-n:t-1]} \\ \tilde y_{[t-n:t-1]} \end{bmatrix} \\ &= K\begin{bmatrix} v_{[t-n:t-1]}^y \\ y_{[t-n:t-1]} \end{bmatrix} + K\begin{bmatrix} v_{[t-n:t-1]}^e \\ e_{[t-n:t-1]} \end{bmatrix}, 
	\end{align*}
	where $K$ is the stabilizing controller. In order to do so, every Hankel matrix of the open-loop system in Algorithm~1 has to be replaced by Hankel matrices of the stabilized system. Moreover, a mapping $V(y^s)$ can be computed that maps a steady-state output $y^s$ of the system to a steady-state input of the stabilizing controller. Finally, the cost functions have to be reformulated to $L_t(u+V(y),y)$ to account for the stabilizing input when determining the optimal steady state. Then, it is still possible to derive a regret upper bound for the prestabilized system, but the theoretical guarantees deteriorate in two ways:
	\begin{enumerate}
		\item The estimates of the measurement error do not asymptotically exactly converge to the true measurement error as stated in Lemma~\ref{lem:error_estimate_convergence}, because the stabilized system is only practically (and not asymptotically) stable due to the measurement noise. Instead, the estimates $\hat e$ inherit their stability properties from the stabilized system. For example, assume that the stabilizing feedback stabilizes some robust positive invariant (RPI) set. In this case, the estimates $\hat e$ converge to the same RPI set around the true measurement error $e$.
		\item The regret bound is increased by additional terms $\norm{Y^{n+\mu+1} H_\alpha^\dagger} \left( C_{sl} T v_{max}+C_{sc} v_{max} \right)$, where the constants $C_{sl}$ and $C_{sc}$ only depend on system parameters $A,B,C,D,n$, and the prediction horizon $\mu$, and $v_{max}$ is an upper bound on the error feedback $\norm{v^e_t} \leq v_{max}$ for all $t$. Thus, the regret upper bound for the prestabilized system becomes \textit{linear} in $T$, which is to be expected since the stabilizing controller feeds back the measurement error $e_t$ at every time step, thereby preventing us from staying at the optimal steady state. In this regard, the feedback of the measurement error $v^e_t$ can also be interpreted as process noise acting on a stable system. \hfill $\square$
	\end{enumerate}
\end{remark}

\section{APPLICATION EXAMPLE - THERMAL CONTROL} \label{sec:numerical_example}
\subsection{Setting}
In this section, we test our OCO-based control scheme on a thermal control problem. Specifically, we consider a Heating Ventilation and Air Conditioning (HVAC) system which controls the temperature of five nonuniform zones. The HVAC system is equipped with a sensor in zone~1,~4, and~5, and actuators adjusting the supply air rate in every zone. The zones are depicted in Fig.~\ref{fig:zone_placement}. 
\begin{figure}
	\centering \small
	\begin{tikzpicture}

\draw (0,0) rectangle (4,4);
\clip (0,0) rectangle (4,4);
\draw (0,2) -- (3.3,2) -- (3.3,3)
	  (2.4,0) -- (2.4,4)
	  (2.4,3) -- (4,3);	
\node[circle,draw=black] (z1) at (1.25,3) {Zone 1};	  
\node (z2) at (3.25,3.5) {Zone 2};
\node (z3) at (2.875,2.5) {Zone 3};
\node[circle,draw=black] (z4) at (1.25,1) {Zone 4};
\node[circle,draw=black] (z5) at (3.25,1) {Zone 5};

\end{tikzpicture}
	\caption{Schematic illustration of the 5 zones controlled by an HVAC system. Measured zones are indicated by circles.}
	\label{fig:zone_placement}
	\vspace{0pt}
\end{figure}
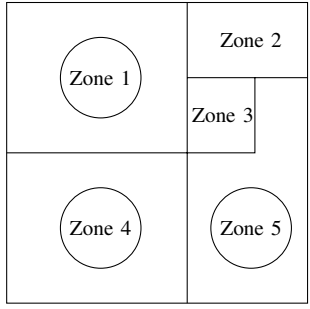
We consider the linear thermal dynamics model proposed in \cite{Li2021,Zhang2016} given by
\[
	C_i \dot T_i = \frac{T^o - T_i}{R_i} + \sum_{j\in\mathcal N(i)} \frac{T_j-T_i}{R_{ij}} + u_{i,t} + q_{i,t},
\]
where $C_i$ is the thermal capacitance of zone~$i$, $T_i$ is the zone temperature of zone~$i$, $T^o$ is the outdoor temperature, $R_i$ is the thermal resistance between the $i$-th zone and outside, $R_{ij}$ is the thermal resistance between zones $i$ and $j$, $\mathcal N(i)$ denotes the set of zones neighboring zone~$i$, $u_{i,t}$ is the control input at time $t$ associated with zone $i$, and $q_{i,t}$ denotes (unknown) process noise, caused, e.g., by additional heat sources in zone $i$. For zone~3, we set $R_3 = \infty$ in our simulation since it is surrounded by other zones and, therefore, not directly influenced by the outdoor temperature. Note that we do not consider process noise in our theoretical work, but do consider it in the simulation as an additional difficulty. We define the system states as $x_t = \begin{bmatrix} x_{1,t} & x_{2,t} & x_{3,t} & x_{4,t} & x_{5,t} \end{bmatrix}^\top \in \mathbb R^5$, where $x_{i,t} = \Delta T_i = T_i - T^o$ denotes the difference between the temperature of the $i$-th zone and the outside temperature at time $t$. Since there are sensors only in zones~1,~4, and~5, but an actuator in every zone, we set
\[
	C = \begin{bmatrix} 1 & 0 & 0 & 0 & 0 \\ 0 & 0 & 0 & 1 & 0 \\ 0 & 0 & 0 & 0 & 1 \end{bmatrix}, \quad B = I_{n=5}, \quad D=0.
\]
Then, we discretize the thermal dynamics with sample time $t_s = 60$\,s. The cost function consists of a term penalizing the deviation from a desired temperature $T^{set}_t$ and a term minimizing control cost
\[
	L_t(u,y) = \frac{1}{2} (y-\Delta T^{set}_t)^\top\Lambda_t(y-\Delta T^{set}_t) + \frac{\lambda_t p_t}{2} \norm{u}^2,
\]
where $\Delta T^{set}_t = T^{set}_t - T^o$, $\Lambda_t\in\mathbb R^{3\times3}$ $\lambda_t \in \mathbb R$ are a priori unknown time-varying parameters, and $p_t$ denotes the a priori unknown energy cost. In particular, $\lambda_t$ and $\Lambda_t$ are weighting factors, trading off user comfort and control cost. We set $T^o = 15$\,°C, $\lambda_t = 10$, $\Lambda_t = I_p$. However, we change $\Lambda_t$ to $\Lambda_t = 0.1 I_p$ between $0$\,am and $6$\,am, in order to save energy during the night. The normalized energy cost $p_t$ is shown in Figure~\ref{fig:energy_cost}. We choose $T^{set}_t = 18\text{\,°C} \cdot 1_p$ but switch it, a priori unbeknown to the algorithm, at $9$\,am to \mbox{$T^{set}_{t} = 21\text{\,°C}\cdot 1_p$.} In Algorithm~1, we choose \mbox{$Q = \begin{bmatrix} I_{N-2n-\mu} & U^\top \end{bmatrix}^\top$,} \mbox{$\mu\in\{10,30\}$,} and $\gamma = 0.15$, which satisfies $\gamma \leq \frac{2}{l_z+\alpha_z}$. At $t=n=5$, we initialize the algorithm with $\hat u_{4}=0_{\mu+1}$, $z^s_{4} = 0_{m+p}$, $u_{[0:4]}=0_{mn}$, and $\hat e_{[0:4]} = \tilde y_{[0:4]}$, while the real (unknown) initial condition is $x_0 + T^o = 17$\,°C for each zone. Note that Algorithm~1 does not control the system for the first $n=5$ time steps. Finally, we sample $q_{t,i}$ uniformly from the interval $[-0.1,0.1]$.

\subsection{Prediction Horizon and Robustness to Measurement Noise}

First, we simulate the proposed Algorithm~1 with different prediction horizons and assess its robustness with respect to measurement noise. To this end, we sample the measurement error $e_t$ uniformly from the interval $[-1,1]$. Moreover, we increase the measurement error of the sensor in the fifth zone $e_{3,t}$ between $10$\,am and $2$\,pm as shown in Figure~\ref{fig:measurement_error} to simulate a failing sensor.

\begin{figure}
	\centering \small
	\newlength\breite
	\newlength\hohe
	\setlength{\breite}{.4\textwidth}
	\setlength{\hohe}{.175\textwidth}
	\input{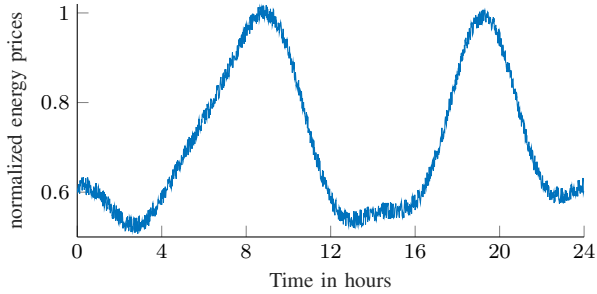}
	\vspace{-3ex}
	\caption{Energy prices $p_t$ over one day.}
	\label{fig:energy_cost}
\end{figure}

\begin{figure}
	\centering \small
	\setlength{\breite}{.4\textwidth}
	\input{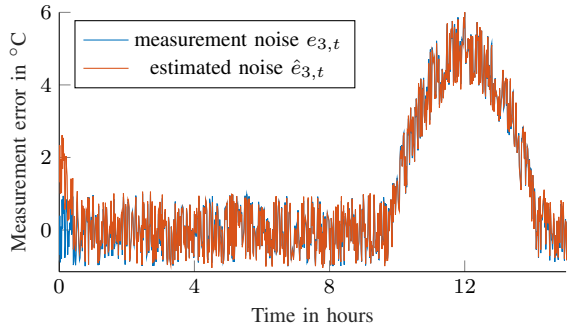}
	\vspace{-3ex}
	\caption{Real and estimated measurement error in zone~$5$.}
	\label{fig:measurement_error}
\end{figure}

The results are illustrated in Figures~\ref{fig:measurement_error}-\ref{fig:simulation_trajectories}. Figure~\ref{fig:measurement_error} shows the measurement error in the fifth zone $e_{3,t}$ and the corresponding estimate of Algorithm~1 $\hat e_{3,t}$ for the first $15$ hours. Initially, the estimate is off by $2$\,°C because of the wrong initialization, but then converges to the true measurement error in accordance with Lemma~\ref{lem:error_estimate_convergence}. Note that a slight mismatch persists due to process noise. 

Figure~\ref{fig:simulation_trajectories} shows the true closed-loop temperatures and inputs of zones~$2$~and~$5$, only one of which can be measured, together with the optimal steady state $(\eta,\theta)$ for both zones. Even though the temperature in zone~2 cannot be measured and the algorithm has to cope with process and measurement noise, the closed loop closely tracks the optimal steady state. This is true for sudden changes, i.e., the change of $\Lambda_t$ at $6$\,am and the change in $T^{set}$ at $9$\,am, as well as for gradual changes due to the fluctuation of the energy prices $p_t$. Note that the increase in measurement noise around $12$\,am has no influence on the control performance. The noise in the true temperatures is due to process noise.

\begin{figure}
	\centering \small
	\setlength{\breite}{.4\textwidth}
	\setlength{\hohe}{.8\textwidth}
	\input{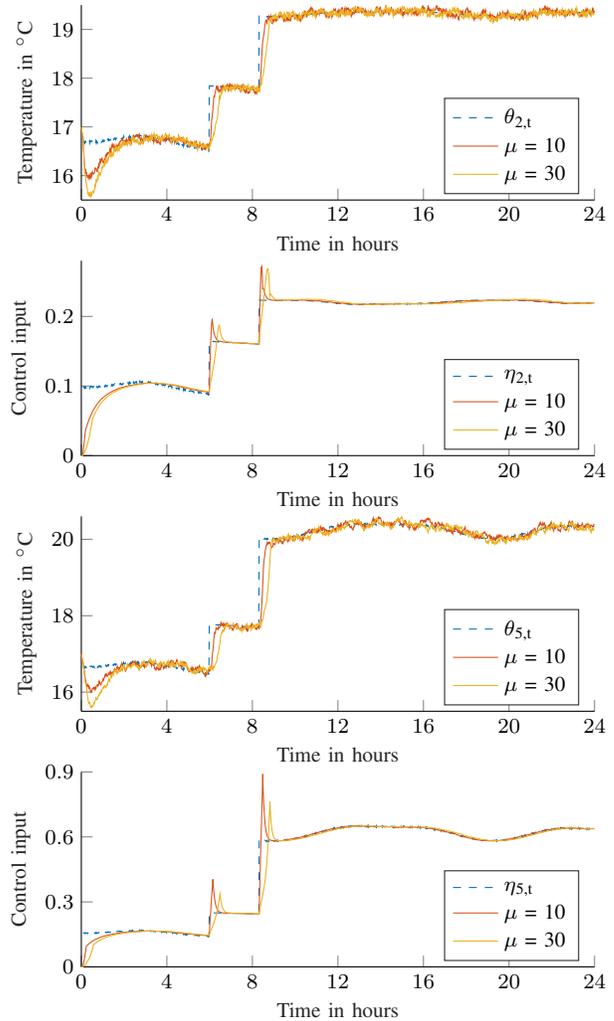}
	\vspace{-3ex}
	\caption{Optimal steady state (blue, dashed), closed-loop real temperatures and inputs for $\mu=10$ (red) and $\mu=30$ (yellow). From top to bottom: Temperatures in zone~2; Control inputs in zone~2; Temperatures in zone~5; Control inputs in zone~5.}
	\label{fig:simulation_trajectories}
\end{figure}

Comparing the different values for the prediction horizon $\mu\in\{10,30\}$, Figure~\ref{fig:simulation_trajectories} indicates that a shorter prediction horizon yields a more aggressive controller. The accumulated cost over the whole day are approximately $7165$ for \mbox{$\mu = 10$} and $7247$ for $\mu = 30$ for the same noise realization. Thus, a shorter prediction horizon yields (slightly) superior performance in this example.

\subsection{Comparison to related work}

In a second experiment, we compare Algorithm~1 to the method proposed in~\cite{Bianchin2021} for a similar setting (compare the discussion in the Introduction). In order to achieve satisfactory performance for both algorithms, we have to reduce the measurement error and sample it uniformly from the interval $[-0.1,0.1]$. For Algorithm~1, we choose the same parameters and initialization as before and choose $\mu = 10$. For the algorithm proposed in~\cite{Bianchin2021}, we set the step size $\eta$ to $0.005$. The results are illustrated in Figure~\ref{fig:simulation_comparison}. It can be seen that both algorithms are able to track the time-varying optimal steady state closely. Moreover, for these parameters, both algorithms achieve almost the same closed-loop cost. However, the algorithm proposed in~\cite{Bianchin2021} does so with a higher overshoot, more oscillations, and larger control inputs.

\begin{figure}
	\centering \small
	\setlength{\breite}{.4\textwidth}
	\input{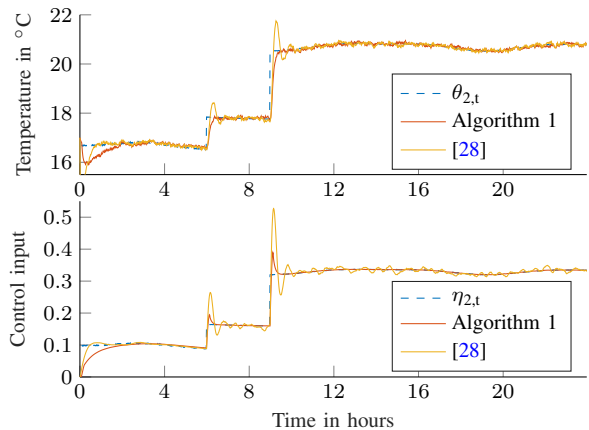}
	\vspace{-4ex}
	\caption{Optimal steady state (blue, dashed), closed-loop real temperatures and inputs for Algorithm~1 (red) and the algorithm proposed in~\cite{Bianchin2021} (yellow). From top to bottom: Temperatures in zone~2; Control inputs in zone~2}
	\label{fig:simulation_comparison}
	\vspace{-2ex}
\end{figure}

\section{CONCLUSION} \label{sec:conclusion}
In this paper, we proposed a data-driven OCO-based scheme for controlling linear dynamical systems subject to measurement noise. We only use a single persistently exciting data trajectory instead of a model of the system and output feedback to derive the control algorithm. The control scheme achieves a similar sublinear regret bound as comparable algorithms from the literature, despite only having access to noisy measurements. In particular, we show that adding measurement noise to the control problem only leads to an additional \textit{constant} term in the regret bound. Compared to previous work, the proposed algorithm is able to handle the more general and practically important case of \textit{economic} cost functions and additionally allows to relax previous assumptions on the steady-state manifold of the system.

Future work includes obtaining theoretical guarantees for the case of both process and measurement noise, as well as considering noisy a priori data. Furthermore, enabling the proposed algorithm to handle state and input constraints, which has already been achieved in a model-based setting, is an interesting direction of future research.
\section*{APPENDIX}
\subsection{Proof of Theorem~\ref{thm:regret_bound}}
Before we prove the regret bound, we first derive some auxiliary results. Note that $\alpha_t$ and $\beta_t$ in \eqref{eq:defalpha_algo} and \eqref{eq:defbeta_algo} always have a solution by Lemma~\ref{lem:feasibility}. Then, by \eqref{eq:defalpha_algo}, we have
\begin{align*}
	&\begin{bmatrix} U^{1:n} \\ Y^{1:n} \end{bmatrix} \alpha_t = \begin{bmatrix} U^{2:n} \alpha_{t-1} \\ u_{t-1} \\ Y^{2:n} \alpha_{t-1} \\ \tilde y_{t-1} - \hat e_{t-1} \end{bmatrix} \refeq{\eqref{eq:defbeta_algo}} \begin{bmatrix} U^{2:n} (\alpha_{t-1}+\beta_{t-1}) \\ u_{t-1} \\ Y^{2:n} (\alpha_{t-1}+\beta_{t-1}) \\ \tilde y_{t-1} - \hat e_{t-1} \end{bmatrix} \\
	\refeq{\eqref{eq:defhate},\eqref{eq:OutputAlgo}} &\begin{bmatrix} U^{2:n+1} \\ Y^{2:n+1} \end{bmatrix} (\alpha_{t-1} + \beta_{t-1}). \numberthis\label{eq:PR_same_init}
\end{align*}
Moreover, the input sequence generated by $\alpha_t$ is given by
\begin{align*}
	&U^{n+1:2n+\mu} \alpha_t \refeq{\eqref{eq:defalpha_algo}} \begin{bmatrix} \sigma \hat u_{t-1} \\ 1_{n} \otimes u^s_{t-1} \end{bmatrix} \\
	\refeq{\eqref{eq:PredInputs}} \, &\begin{bmatrix} \sigma(U^{n+1:n+\mu+1} (\alpha_{t-1} + \beta_{t-1})) \\ 1_{n} \otimes u^s_{t-1} \end{bmatrix} \\
	\refeq{\eqref{eq:defbeta_algo}} \, &U^{n+2:2n+\mu+1} (\alpha_{t-1} + \beta_{t-1}). \numberthis\label{eq:PR_same_input}
\end{align*}
Hence, $\alpha_t$ and $\alpha_{t-1} + \beta_{t-1}$ give rise to the same initialization \eqref{eq:PR_same_init} and the same input sequence \eqref{eq:PR_same_input} and, therefore, must produce the same output trajectory \cite{Markovsky2008}
\[
	Y^{n+1:2n+\mu} \alpha_{t} = Y^{n+2:2n+\mu+1} (\alpha_{t-1} + \beta_{t-1}) \numberthis \label{eq:PredRec}.
\]
Note that 
\begin{align*}
	Y^{n+\mu+1:2n+\mu} (\alpha_t + \beta_t) &\refeq{\eqref{eq:defbeta_algo}} 1_n\otimes y^s_t, \\
	U^{n+\mu+1:2n+\mu+1} (\alpha_t + \beta_t) &\refeq{\eqref{eq:defbeta_algo}} 1_{n+1}\otimes u^s_t,
\end{align*}
which implies that the predicted system is at the equilibrium $(u^s_t,y^s_t)$ for $n$ time steps and at the $(n+1)$-th time step, $u^s_t$ is applied again. Hence, the system remains at the same equilibrium and we have
\begin{equation}
	Y^{n+\mu+1:2n+\mu+1} (\alpha_t + \beta_t) = 1_{n+1}\otimes y^s_t. \label{eq:terminal_states}
\end{equation}
Moreover, we need the following key result on the convergence rate of projected gradient descent from \mbox{\cite[Theorem~2.2.14]{Nesterov2018}}. Let $L(z)$ be an $\alpha_z$-strongly convex and $l_z$-smooth function to be minimized. Then, one projected gradient descent step $z_1 = \Pi_Z(z_0 - \gamma \nabla L(z_0))$, where $\Pi_Z(\cdot)$ denotes projection onto the set $Z$ and $\gamma\leq\frac{2}{\alpha_z+l_z}$ is the step size, satisfies
\begin{align}
	\norm{z_1 - z_0} \leq \kappa \norm{z_0 - z^*}, \label{eq:gradient_descent}
\end{align}
where $z^* = \arg\min_{z\in Z} L(z)$ and $\kappa = 1-\alpha_z\gamma$.

Now, we are ready to bound the regret $\mathcal R$ of Algorithm~1. By definition of the regret and Lipschitz continuity of the cost functions, we have
\begin{align*}
	\mathcal R &\refeq{\eqref{eq:def_regret}} \sum_{t=0}^T L_t(z_t) - L_t(\zeta_t)
	= C_\mu + \sum_{t=\mu}^T L_t(z_t) - L_t(\zeta_t) \\
	&\leq C_\mu + L_z \sum_{t=0}^{T-\mu} \norm{z_{t+\mu} - \zeta_{t+\mu}}
\end{align*}
where $C_\mu = \sum_{t=0}^{\mu-1} L_t(z_t) - L_t(\zeta_t)$ is a constant which is independent of $T$. Applying the triangle inequality yields
\begin{align*}
	\mathcal R \leq C_\mu + &L_z \sum_{t=0}^{T-\mu} \norm{z_{t+\mu} - \hat z^\mu_t} +L_z \sum_{t=0}^{T-\mu} \norm{\hat z_t^\mu - \zeta_{t-1}} \\ + &L_z \sum_{t=0}^{T-\mu} \norm{\zeta_{t+\mu} - \zeta_{t-1}}.
\end{align*}
Again applying the triangle inequality, we get
\begin{align*}
	&\sum_{t=0}^{T-\mu} \norm{\zeta_{t+\mu} - \zeta_{t-1}} = \sum_{t=0}^{T-\mu} \norm{\sum_{i=0}^\mu \zeta_{t+i} - \zeta_{t+i-1}} \\
	\leq &\sum_{t=0}^{T-\mu} \sum_{i=0}^\mu \norm{\zeta_{t+i} - \zeta_{t+i-1}} 
	\leq (\mu+1) \sum_{t=0}^{T} \norm{\zeta_t - \zeta_{t-1}},
\end{align*}
which implies
\begin{align}
	\mathcal R \leq C_\mu + &L_z \sum_{t=0}^{T-\mu} \norm{z_{t+\mu} - \hat z^\mu_t} +L_z \sum_{t=0}^{T-\mu} \norm{\hat z_t^\mu - \zeta_{t-1}} \nonumber \\ + &L_z (\mu+1) \sum_{t=0}^{T} \norm{\zeta_t - \zeta_{t-1}}.\label{eq:preliminary_bound}
\end{align}
Next, we will establish bounds for the first two sums in \eqref{eq:preliminary_bound} separately. First, we bound the predicted regret $\sum_{t=0}^{T-\mu} \norm{\hat z_t^\mu - \zeta_{t-1}}$. We note that $\zeta_t$ is only defined for $t \in \mathbb I_{[0,T]}$. Therefore, we are free to choose $\zeta_{-1} = z^s_{-1}$. Thus, we have for each $\tau \in \mathbb I_{[0,T]}$
\begin{align*}
	&\sum_{t=0}^\tau \norm{\hat z_t^\mu - \zeta_{t-1}} \refeq{\eqref{eq:mu_ahead_prediction}} \sum_{t=0}^\tau \norm{\begin{bmatrix} u^s_{t-1} \\ Y^{n+\mu+1}\alpha_t \end{bmatrix} - \zeta_{t-1}} \\
	\refeq{\eqref{eq:PredRec}} &\sum_{t=0}^\tau \norm{\begin{bmatrix} u^s_{t-1} \\ Y^{n+\mu+2}(\alpha_{t-1} + \beta_{t-1}) \end{bmatrix}-\zeta_{t-1}} \\
	\refeq{\eqref{eq:defbeta_algo}} &\sum_{t=0}^\tau \norm{\begin{bmatrix} u^s_{t-1} \\ y^s_{t-1} \end{bmatrix}-\zeta_{t-1}} \\
	\leq &\sum_{t=0}^{\tau-1} \norm{z^s_t - \zeta_{t-1}} + \sum_{t=0}^{\tau-1} \norm{\zeta_t - \zeta_{t-1}} \\
	\refleq{\eqref{eq:OGD},\eqref{eq:gradient_descent}} &\kappa \sum_{t=0}^\tau \norm{ \hat z^\mu_t - \zeta_{t-1}} + \sum_{t=0}^{\tau-1} \norm{\zeta_t - \zeta_{t-1}}.
\end{align*}
Since $\kappa < 1$, rearranging yields
\begin{equation}
	\sum_{t=0}^\tau \norm{\hat z_t^\mu - \zeta_{t-1}} \leq \frac{1}{1-\kappa} \sum_{t=0}^{\tau-1} \norm{\zeta_t - \zeta_{t-1}}. \label{eq:hatz-zeta}
\end{equation}

Having established a bound on the second sum in~\eqref{eq:preliminary_bound}, we proceed to bound the accumulated prediction error $\sum_{t=0}^{T-\mu} \norm{z_{t+\mu} - \hat z^\mu_t}$. In order to do so, we first need a bound on $\sum_{t=0}^{T} \norm{g_t}$, where $g_t$ is defined in~\eqref{eq:defgt}. First, note that
\begin{align*}
	\sum_{t=0}^T \norm{z^s_t - \zeta_{t-1}} \refleq{\eqref{eq:OGD},\eqref{eq:gradient_descent}} &\kappa \sum_{t=0}^T \norm{\hat z^\mu_t - \zeta_{t-1}} \\
	\refleq{\eqref{eq:hatz-zeta}} ~ &\frac{\kappa}{1-\kappa} \sum_{t=0}^{T-1} \norm{\zeta_t - \zeta_{t-1}}, \numberthis \label{eq:zs-zeta}
\end{align*}
where we used~\eqref{eq:hatz-zeta} with $\tau = T$ in the last line. Therefore,
\begin{align*}
	\sum_{t=0}^T \norm{z^s_t - z^s_{t-1}} &\leq \sum_{t=0}^T \norm{z^s_t - \zeta_{t-1}} + \sum_{t=0}^T \norm{z^s_{t-1} - \zeta_{t-1}} \\
	&\leq 2 \sum_{t=0}^T \norm{z^s_{t} - \zeta_{t-1}} + \sum_{t=0}^{T-1} \norm{\zeta_t - \zeta_{t-1}} \\
	&\refleq{\eqref{eq:zs-zeta}} \frac{1+\kappa}{1-\kappa} \sum_{t=0}^{T-1} \norm{\zeta_t - \zeta_{t-1}}, \numberthis \label{eq:zs-zs}
\end{align*}
where we used $\zeta_{-1} = z^s_{-1}$, positivity of the norm and the triangle inequality in the second inequality. Then, we have

\begin{align*}
	\sum_{t=0}^{T} \norm{g_t} &\refeq{\eqref{eq:defgt}} \sum_{t=0}^{T} \norm{\begin{bmatrix} 1_{n+1} \otimes u^s_t - U^{n+\mu+1:2n+\mu+1} \alpha_t \\ 1_n \otimes y^s_t - Y^{n+\mu+1:2n+\mu} \alpha_t \end{bmatrix}} \\
	&\refeq{\eqref{eq:defalpha_algo},\eqref{eq:PredRec},\eqref{eq:terminal_states}} \sum_{t=0}^{T} \norm{\begin{bmatrix} 1_{n+1} \otimes (u^s_t - u^s_{t-1}) \\ 1_n \otimes (y^s_t - y^s_{t-1}) \end{bmatrix}}
\end{align*}
Rearranging the vector on the right-hand side yields
\begin{align*}
	\sum_{t=0}^{T} \norm{g_t} &\leq \sum_{t=0}^{T} \norm{ 1_{n+1} \otimes \begin{bmatrix} u^s_t - u^s_{t-1} \\ y_t^s - y^s_{t-1} \end{bmatrix}} \\
	&\leq \sqrt{n+1} \sum_{t=0}^{T} \norm{z^s_t - z^s_{t-1}} \\
	&\refleq{\eqref{eq:zs-zs}} \sqrt{n+1} \frac{1+\kappa}{1-\kappa} \sum_{t=0}^{T-1} \norm{\zeta_t - \zeta_{t-1}}. \numberthis \label{eq:gt}
\end{align*}
Let $\tilde Q = \left( I_{N-2n-\mu} - \left( Q \left( I_{N-2n-\mu} - H_\beta^\dagger H_\beta \right)\right)^\dagger Q \right) H_\beta^\dagger$, then we have $\beta_t = \tilde Q g_t$ by~\eqref{eq:defbeta}. Hence,
\begin{equation}
	\sum_{t=0}^{T} \norm{\beta_t} \refleq{\eqref{eq:gt}} C_\beta \sum_{t=0}^{T-1} \norm{\zeta_t - \zeta_{t-1}}, \label{eq:bound_beta}
\end{equation}
where $C_\beta = \norm{\tilde Q} \sqrt{n+1} \frac{1+\kappa}{1-\kappa}$.

Having established a bound on $\sum_{t=0}^{T} \norm{\beta_t}$, we proceed to first bound the output prediction error $\sum_{t=0}^{T-\mu} \norm{Y^{n+\mu+1} \alpha_t - y_{t+\mu}}$, and then the full prediction error $\sum_{t=0}^{T-\mu} \norm{z_{t+\mu} - \hat z^\mu_t}$. To this extent, we first bound the error of the measurement noise estimates in \eqref{eq:defhate}. Let
\[S_O = \begin{bmatrix} C^\top & (CA)^\top & \dots & (CA^{n-1})^\top \end{bmatrix}^\top\]
be the system's observability matrix and recall the definition of $\epsilon_t \refeq{\eqref{eq:def_epsilon}} \alpha_t - \alpha^*_t$ in the proof of Lemma~\ref{lem:error_estimate_convergence}.
Since $U\epsilon_t = 0$, $Y^{1:n} \epsilon_t$ describes a trajectory of the unforced system by Theorem~\ref{thm:fundamental_lemma}. Thus, by observability there exists a unique internal state $x^\epsilon_t$ such that
\[Y^{1:n}\epsilon_t = S_O x^\epsilon_{t-n}\]
holds for all $t$. Hence,
\[ x^\epsilon_{t-n} = S_O^\dagger Y^{1:n} \epsilon_t.\]
Recalling that the error trajectory follows the unforced system dynamics by Lemma~\ref{lem:error_estimate_convergence}, we can conclude
\begin{align*}
	Y^{1:n} \epsilon_t &= S_O x^\epsilon_{t-n} = S_O A x^\epsilon_{t-n-1} \\
	&= S_O A S_O^\dagger Y^{1:n} \epsilon_{t-1}.
\end{align*}
Using these arguments repeatedly, we get
\begin{align*}
	&\sum_{t=0}^{T-\mu} \norm{e_{[t-n:t-1]} - \hat  e_{[t-n:t-1]}} \refeq{\eqref{eq:estimate_recursion}} \sum_{t=0}^{T-\mu} \norm{Y^{1:n}\epsilon_t} \\
	= &\sum_{t=0}^{T-\mu} \norm{S_O A S_O^\dagger Y^{1:n} \epsilon_{t-1}}
	= \sum_{t=0}^{T-\mu} \norm{S_O A^t S_O^\dagger Y^{1:n} \epsilon_0} \\
	\leq &\norm{S_O}\norm{S_O^\dagger} \sum_{t=0}^{T-\mu} \norm{A^t} \norm{ e_{[-n:-1]} - \hat e_{[-n:-1]}}.
\end{align*}
Note that $\norm{S_O}\norm{S_O^\dagger} = \frac{\sigma_{max}(S_O)}{\sigma_{min}(S_O)}$ where $\sigma_{max}(S_O)$, $\sigma_{min}(S_O)$ denote the largest and smallest singular value of $S_O$, respectively. Moreover, since $A$ is Schur stable by Assumption~\ref{assump:system_properties}, there exist constants $c > 0$ and $\lambda \in (0,1)$ such that $\norm{A^t} \leq c\lambda^t$. Thus,
\begin{align}
	\sum_{t=0}^{T-\mu} \norm{e_{[t-n:t-1]} - \hat e_{[t-n:t-1]}} &\leq \frac{\sigma_{max}(S_O)}{\sigma_{min}(S_O)} E_0 c \sum_{t=0}^{T-\mu} \lambda^t \nonumber \\
	&\leq \frac{\sigma_{max}(S_O)}{\sigma_{min}(S_O)} \frac{c}{1-\lambda} E_0. \label{eq:e-hate}
\end{align}
Next, let 
\begin{equation} \label{eq:def_baralpha*}
	\bar \alpha^*_t = H_\alpha^\dagger \begin{bmatrix} u_{[t-n:t+\mu]} \\ 1_n \otimes u^s_{t-1} \\ y_{[t-n:t-1]} \end{bmatrix}.
\end{equation}
Then, we have $y_{t+\mu} = Y^{n+\mu+1} \bar \alpha_t^*$. Moreover, we have that
\begin{align*}
	u_{t+j} &\refeq{\eqref{eq:OutputAlgo}} U^{n+1} (\alpha_{t+j} + \beta_{t+j}) \\
	&\refeq{\eqref{eq:defalpha_algo},\eqref{eq:PredInputs}} U^{n+2} (\alpha_{t+j-1} + \beta_{t+j-1}) + U^{n+1} \beta_{t+j}.
\end{align*}
Applying~\eqref{eq:defalpha_algo}~and~\eqref{eq:PredInputs} repeatedly, we get
\begin{equation} \label{eq:ut+j_1}
	u_{t+j} = U^{n+j+1} \alpha_t + \sum_{i=0}^{j} U^{n+i+1} \beta_{t+j-i}
\end{equation}
for $0 \leq j \leq \mu-1$ and
\begin{equation}
	u_{t+\mu} = u^s_{t-1} + \sum_{i=0}^{\mu} U^{n+i+1} \beta_{t+\mu-i}. \label{eq:ut+j_2}
\end{equation}

Define $C_1 = \norm{Y^{n+\mu+1} H_\alpha^\dagger}$. Combining the above results, we are now ready to bound the output prediction error $\sum_{t=0}^{T-\mu} \norm{Y^{n+\mu+1} \alpha_t - y_{t+\mu}}$. By Theorem~\ref{thm:fundamental_lemma}, any $\alpha_t$ satisfying \eqref{eq:defalpha_algo} results in the same output $Y^{n+\mu+1}\alpha_t$ since the vector on the right-hand side of \eqref{eq:defalpha_algo} uniquely specifies the input sequence and initial condition (compare \cite{Markovsky2008}). Hence, in the following we assume without loss of generality that $\alpha_t$ is chosen according to~\eqref{eq:defalpha}.
\begin{align*}
	&\sum_{t=0}^{T-\mu} \norm{Y^{n+\mu+1} \alpha_t - y_{t+\mu}} = \sum_{t=0}^{T-\mu} \norm{Y^{n+\mu+1} (\alpha_t - \bar\alpha_t^*)} \\
	\refleq{\eqref{eq:defalpha},\eqref{eq:def_baralpha*}} &C_1 \sum_{t=0}^{T-\mu} \norm{ \begin{bmatrix} u_{[t-n:t-1]} - u_{[t-n:t-1]} \\ U^{n+1} \alpha_t - u_t \\ \vdots \\ U^{n+\mu+1} \alpha_t - u_{t+\mu} \\ 1_n \otimes (u^s_{t-1} - u^s_{t-1}) \\ \tilde y_{[t-n:t-1]} - \hat e_{[t-n:t-1]} + y_{[t-n:t-1]} \end{bmatrix} } \\[5pt]
	\refleq{\eqref{eq:ut+j_1},\eqref{eq:ut+j_2}} &C_1 \sum_{t=0}^{T-\mu} \norm{ \begin{bmatrix} U^{n+1} \beta_t \\ \sum_{i=0}^1 U^{n+i+1} \beta_{t+1-i} \\ \vdots \\ \sum_{i=0}^\mu U^{n+i+1} \beta_{t+\mu-i} \\ e_{[t-n:t-1]} - \hat e_{[t-n:t-1]} \end{bmatrix} }
\end{align*}
Define $C_e = C_1 \frac{\sigma_{max}(S_O)}{\sigma_{min}(S_O)} \frac{c}{1-\lambda}$. Then, we get
\begin{align*}
	&\sum_{t=0}^{T-\mu} \norm{Y^{n+\mu+1} \alpha_t - y_{t+\mu}} \\
	\leq &C_1 \sum_{t=0}^{T-\mu} \sum_{i=0}^\mu \norm{ U^{n+1:n+1+i} \beta_{t+\mu-i}} \\ &\quad + C_1 \sum_{t=0}^{T-\mu} \norm{e_{[t-n:t-1]} - \hat e_{[t-n:t-1]}} \\
	\refleq{\eqref{eq:e-hate}} &C_1 \sum_{t=0}^{T-\mu} \sum_{i=0}^{\mu} \norm{U^{n+1:n+\mu+1}} \norm{\beta_{t+\mu-i}} + C_e E_0 \\
	\leq &C_1 \norm{U^{n+1:n+\mu+1}} (\mu+1) \sum_{t=0}^{T} \norm{\beta_{t}}  + C_e E_0 \\
	\refleq{\eqref{eq:bound_beta}} &C_2 \sum_{t=0}^{T-1} \norm{\zeta_t - \zeta_{t-1}} + C_e E_0, \numberthis \label{eq:bound_output_pred_error}
\end{align*}
where $C_2 = C_1 \norm{U^{n+1:n+\mu+1}} (\mu+1) C_\beta$. 

Next, we are finally ready to bound the prediction error $\sum_{t=0}^{T-\mu} \norm{\hat z^\mu_t - z_{t+\mu}}$:
\begin{align*}
	&\sum_{t=0}^{T-\mu} \norm{\hat z^\mu_t - z_{t+\mu}} \refeq{\eqref{eq:mu_ahead_prediction}} \sum_{t=0}^{T-\mu} \norm{\begin{bmatrix} u^s_{t-1} - u_{t+\mu} \\ Y^{n+\mu+1} \alpha_t - y_{t+\mu} \end{bmatrix} } \\
	\refleq{\eqref{eq:ut+j_2}} &\sum_{t=0}^{T-\mu} \norm{\sum_{i=0}^\mu U^{n+i+1} \beta_{t+\mu-i}} + \sum_{t=0}^{T-\mu} \norm{Y^{n+\mu+1} \alpha_t - y_{t+\mu}}
\end{align*}
Positivity of the norm and inserting \eqref{eq:bound_output_pred_error} yields
\begin{align*}
	&\sum_{t=0}^{T-\mu} \norm{\hat z^\mu_t - z_{t+\mu}}
	\leq \norm{U^{n+1:n+\mu+1} } \sum_{t=0}^{T-\mu} \sum_{i=0}^\mu \norm{\beta_{t+\mu-i}} \\ &\quad + C_2 \sum_{t=0}^{T-1} \norm{\zeta_t - \zeta_{t-1}} + C_e E_0 \\
	&\leq \norm{U^{n+1:n+\mu+1} } (\mu+1) \sum_{t=0}^{T} \norm{\beta_{t}} \\ &\quad + C_2 \sum_{t=0}^{T-1} \norm{\zeta_t - \zeta_{t-1}} + C_e E_0 \\
	&\refleq{\eqref{eq:bound_beta}} \norm{U^{n+1:n+\mu+1} } (\mu+1) C_\beta \sum_{t=0}^{T-1} \norm{\zeta_t - \zeta_{t-1}} \\ &\quad + C_2 \sum_{t=0}^{T-1} \norm{\zeta_t - \zeta_{t-1}} + C_e E_0. \numberthis \label{eq:hatz-z}
\end{align*}
The result then follows from inserting~\eqref{eq:hatz-zeta} with $\tau = T-\mu$ and~\eqref{eq:hatz-z} into~\eqref{eq:preliminary_bound}. \endproof

\bibliographystyle{IEEEtran}
\bibliography{IEEEabrv,bib}

\begin{IEEEbiography}[{\includegraphics[width=2.5cm,height=3.2cm,clip,keepaspectratio]{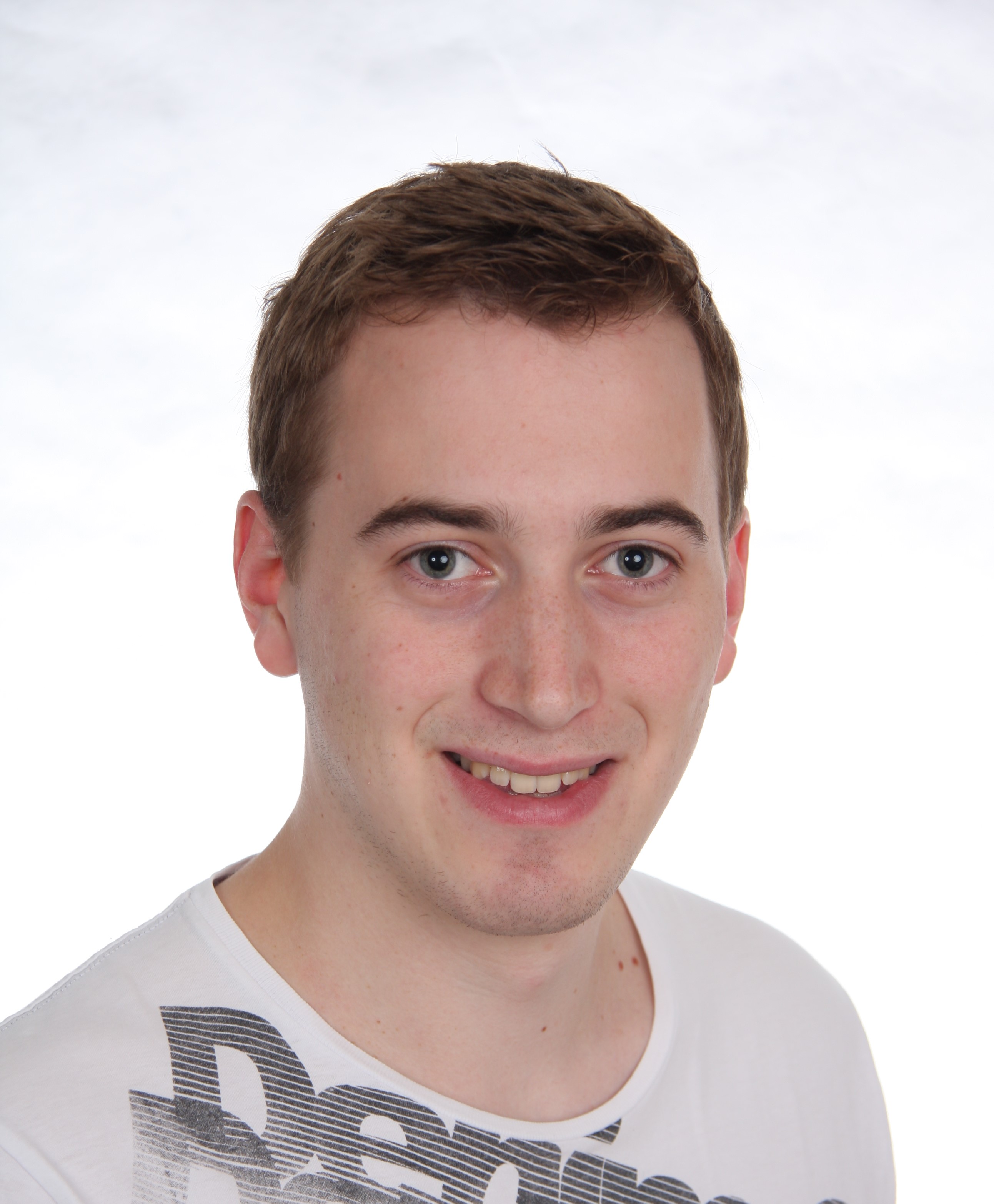}}]{Marko Nonhoff}{\space} received his Master degree in engineering cybernetics from the University of Stuttgart, Germany, in 2018. Since then, he has been a Research Assistant at the Leibniz University Hannover, Germany, where he is working on his Ph.D. under the supervision of Prof. Matthias A. Müller. His research interests are in the area of learning-based control and online optimization.
\end{IEEEbiography}

\begin{IEEEbiography}[{\includegraphics[width=2.5cm,height=3.2cm,clip,keepaspectratio]{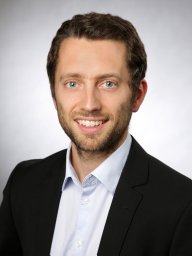}}]{Matthias A. Müller}{\space}(Senior Member, IEEE) received a Diploma degree in Engineering Cybernetics from the University of Stuttgart, Germany, and an M.S. in electrical and computer Engineering from the University of Illinois at Urbana-Champaign, US, both in 2009. In 2014, he obtained a Ph.D. in mechanical engineering, also from the University of Stuttgart, Germany, for which he received the 2015 European Ph.D.	award on control for complex and heterogeneous systems. Since 2019, he is Director of the Institute of Automatic Control and full professor at the Leibniz University Hannover, Germany. 
	
He obtained an ERC Starting Grant in 2020 and is recipient of the inaugural Brockett-Willems Outstanding Paper Award for the best paper published in Systems \& Control Letters in the period 2014-2018. His research interests include nonlinear control and estimation, model predictive control, and data-/learning-based control, with application in different fields including biomedical engineering. 
\end{IEEEbiography}

\end{document}